\newtheorem{theorem}{Theorem}
\theoremstyle{plain}
\newtheorem{corollary}{Corollary}
\newtheorem{lemma}{Lemma}
\newtheorem{proposition}{Proposition}
\newtheorem{remark}{Remark}
\numberwithin{equation}{section}
\begin{document}
\title[Absolute monotonicity involving complete elliptic integral]{%
Absolutely monotonic functions related to the asymptotic formula for the
complete elliptic integral of the first kind}
\author{Tiehong Zhao and Zhen-Hang Yang*}
\address{Tiehong Zhao\\
School of mathematics, Hangzhou Normal University, Hangzhou, Zhejiang,
China, 311121}
\email{tiehong.zhao@hznu.edu.cn}
\urladdr{https://orcid.org/0000-0002-6394-1049}
\address{Zhen-Hang Yang\\
State Grid Zhejiang Electric Power Company Research Institute, Hangzhou,
Zhejiang, China, 310014}
\email{yzhkm@163.com}
\urladdr{https://orcid.org/0000-0002-2719-4728}
\thanks{Corresponding Author: Zhen-Hang Yang*}
\subjclass[2000]{Primary 33E05, 26A48 Secondary 40A05, 41A10}
\keywords{Complete elliptic integral of the first kind, absolute
monotonicity, hypergeometric function, recurrence formula, functional
inequality}

\begin{abstract}
Let $\mathcal{K}\left( x\right) $ be the complete elliptic integral of the
first kind and%
\begin{equation*}
\text{\ }\mathcal{G}_{p}\left( x\right) =e^{\mathcal{K}\left( \sqrt{x}%
\right) }-\frac{p}{\sqrt{1-x}}
\end{equation*}%
for $p\in \mathbb{R}$ and $x\in \left( 0,1\right) $. In this paper we find
the necessary and sufficient conditions for the functions $\pm \mathcal{G}%
_{p}^{\left( k\right) }\left( x\right) $ ($k\in \mathbb{N\cup }\left\{
0\right\} $) to be absolutely monotonic on $\left( 0,1\right) $, which
extend previous known results and yield several new functional inequalities
involving the complete elliptic integral of the first kind. More
importantly, we provide a new method to deal with those absolute
monotonicity problem by proving the monotonicity of a sequence generated by
the coefficients of the power series of $\mathcal{G}_{p}\left( x\right) $.
\end{abstract}

\maketitle

\section{Introduction}

The complete elliptic integral of the first kind $\mathcal{K}\left( r\right) 
$ is defined on $\left( 0,1\right) $ by%
\begin{equation*}
\mathcal{K}\left( r\right) =\int_{0}^{\pi /2}\frac{1}{\sqrt{1-r^{2}\sin ^{2}t%
}}dt,
\end{equation*}%
which can also be expressed by the Gaussian hypergeometric function%
\begin{equation}
\mathcal{K}\left( r\right) =\frac{\pi }{2}F\left( \frac{1}{2},\frac{1}{2}%
;1;r^{2}\right) =\frac{\pi }{2}\sum_{n=0}^{\infty }\frac{\left( 1/2\right)
_{n}^{2}}{\left( n!\right) ^{2}}r^{2n},  \label{K-hs}
\end{equation}%
where%
\begin{equation*}
F\left( a,b;c;x\right) =\sum_{n=0}^{\infty }\frac{\left( a\right) _{n}\left(
b\right) _{n}}{\left( c\right) _{n}}\frac{x^{n}}{n!}
\end{equation*}%
is the Gaussian hypergeometric function defined on $x\in \left( -1,1\right) $%
, here $-c\notin \mathbb{N\cup }\left\{ 0\right\} $ and $\left( a\right)
_{n}=\Gamma \left( n+a\right) /\Gamma \left( a\right) $ in which $\Gamma
(x)=\int_{0}^{\infty }t^{x-1}e^{-t}dt$ $(x>0)$ is the gamma function. The
complete elliptic integral of the first kind $\mathcal{K}\left( r\right) $
has the asymptotic formula%
\begin{equation}
\mathcal{K}\left( r\right) \thicksim \ln \frac{4}{r^{\prime }}\text{ \ as }%
r\rightarrow 1^{-},  \label{Kaf}
\end{equation}%
where and in what follows $r^{\prime }=\sqrt{1-r^{2}}$ (see \cite%
{Byrd-HEIES-1971}).

There are many known properties of $\mathcal{K}\left( r\right) $, including
monotonicity, convexity, absolute monotonicity and inequalities. Here we
mention several properties related to the asymptotic formula (\ref{Kaf}),
which are more interesting. Let%
\begin{equation}
\mathcal{F}_{p}\left( x\right) =\left( 1-x\right) ^{p}e^{\mathcal{K}\left( 
\sqrt{x}\right) }\text{ \ and \ }\mathcal{G}_{p}\left( x\right) =e^{\mathcal{%
K}\left( \sqrt{x}\right) }-\frac{p}{\sqrt{1-x}}.  \label{Fp,Gp}
\end{equation}%
In 1996, Qiu and Vamanamurthy \cite[Theorem 1.2]{Qiu-SIAM-JMA-27-1996}
showed that the function $r\mapsto \mathcal{F}_{1/2}\left( r^{2}\right) $ is
strictly decreasing and concave from $\left( 0,1\right) $ onto $\left(
4,e^{\pi /2}\right) $. Yang and Chu \cite[Theorem 3.1]{Yang-MIA-21-2018}
proved that the function $r\mapsto \mathcal{F}_{p/2}\left( r^{2}\right) $ is
strictly increasing on $\left( 0,1\right) $ if and only if $p\leq \pi /4$
and strictly decreasing on $\left( 0,1\right) $ if and only if $p\geq 1$
using a different way. Further, Yang and Tian \cite{Yang-JMI-15-2021} proved
that $-\mathcal{F}_{p}^{\prime }$ is absolutely monotonic on $\left(
0,1\right) $ if and only if $1/2\leq p\leq \left( \pi +4+\sqrt{16-\pi }%
\right) /8=1.340...$, and $\mathcal{F}_{p}$ is absolutely monotonic on $%
\left( 0,1\right) $ if and only if $p\leq \pi /8=0.392...$. Yang and Chu 
\cite[Theorem 3.3]{Yang-MIA-21-2018} proved that $\mathcal{G}_{p}$ is
strictly decreasing on $(0,1)$ if and only if $p\geq 4$ and strictly
increasing on $(0,1)$ if and only if $p\leq \pi e^{\pi /2}/4$.

For the function $f_{p}\left( x\right) =\mathcal{K}(\sqrt{x})/\ln \left( p/%
\sqrt{1-x}\right) $ ($p\geq 1$), Anderson, Vamanamurthy and Vuorinen \cite[
Theorem 2.2]{Anderson-SIAM-JMA-21-1990}\ proved that the function $r\mapsto
f_{c}\left( r^{2}\right) $ is strictly decreasing if and only if $1\leq
c\leq 4$ and increasing if and only if $c\geq e^{2}$. Qiu and Vamanamurthy 
\cite{Qiu-PMJ-5-1995} further verified that $r\mapsto f_{4}\left(
r^{2}\right) $ is strictly concave on $\left( 0,1\right) $. Yand and Tian 
\cite{Yang-AADM-13-2019} showed that $f_{p}$ is concave on $\left(
0,1\right) $ if and only if $p=e^{4/3}$. After three years, Tian and Yang 
\cite[Theorem 1]{Tian-RM-77-2022} proved further that $p=e^{4/3}$ is also
the necessary and sufficient condition for the function $-f_{p}^{\prime
\prime }$ to be absolutely monotonic on $\left( 0,1\right) $ by using
recurrence technique. Recently, Alzer and Richards \cite{Alzer-ITSF-31-2020}
found that the function $x\mapsto 1/f_{p}\left( x\right) $ is strictly
concave on $\left( 0,1\right) $ if and only if $p\geq e^{8/5}$. This
condition was proved in \cite[Theorem 3]{Tian-RM-77-2022} to be also
necessary and sufficient such that the function $x\mapsto -\left[
1/f_{p}\left( x\right) \right] ^{\prime \prime }$ is absolutely monotonic on 
$\left( 0,1\right) $. Moreover, Tian and Yang \cite[Theorem 1]%
{Tian-RM-78-2023} discovered the function $x\mapsto 1/f_{p}\left( x\right) $
is strictly convex on $\left( 0,1\right) $ if and only if $0<p\leq 4$. More
information about monotonicity, convexity and absolute monotonicity for the
functions $f_{p}\left( x\right) $ and its reciprocal can be found in \cite%
{Wang-AADM-14-2020,Yang-AMS-42-2022,Tian-RM-77-2022,Tian-RM-78-2023}, also
refer to \cite{Qiu-SIAM-JMA-30-1999, Yang-MIA-20-2017, Wang-AMS-39B-2019,
Yang-RJ-48-2019,Richards-ITSF-32-2020,Zhao-RACSAM-115-2021,Zhao-JMI-15-2021,Chen-RACSAM-116-2022, Wang-IJPAM-54-2023}
for their generalizations. Besides, several remarkable inequalities related
to the asymptotic formula (\ref{Kaf}) can be seen in \cite%
{Carlson-SIAM-JMA-16-1985,Anderson-SIAM-JMA-23-1992,Qiu-SIAM-JMA-29-1998,Alzer-MPCPS-124(2)-1998,Alzer-JCAM-172-2004, Yang-JIA-2014-192,Yang-MIA-23-2020,Yang-RACSAM-115-2021}%
.

Let us return to (\ref{Fp,Gp}). The absolute monotonicity of the functions $%
\mathcal{F}_{p}$ in \cite{Yang-JMI-15-2021} and the monotonicity of $%
\mathcal{G}_{p}$ in \cite{Yang-MIA-21-2018} encourage us to investigate the
absolute monotonicity of the function $\mathcal{G}_{p}$ on $\left(
0,1\right) $. This seems to be not difficult, however, it is actually quite
challenging. Fortunately, we succeed in solving the problem using the
following new idea:

\textbf{Step 1}: Let%
\begin{equation}
e^{\mathcal{K}\left( \sqrt{x}\right) }=\sum_{n=0}^{n}b_{n}x^{n}\text{ \ and
\ }\frac{1}{\sqrt{1-x}}=\sum_{n=0}^{n}W_{n}x^{n},  \label{ekrx,1/r1-x-ps}
\end{equation}%
where%
\begin{equation}
W_{n}=\frac{\left( 2n-1\right) !!}{\left( 2n\right) !!}=\frac{\Gamma \left(
n+1/2\right) }{\Gamma \left( 1/2\right) \Gamma \left( n+1\right) }
\label{Wn}
\end{equation}%
which satisfies the recurrence relation%
\begin{equation}
W_{n+1}=\frac{n+1/2}{n+1}W_{n}.  \label{Wn-rr}
\end{equation}%
Then%
\begin{equation}
G\left( x\right) =\left[ \frac{\pi }{8}F\left( \frac{1}{2},\frac{1}{2}%
;2;x\right) -\frac{1}{2}\right] e^{\mathcal{K}\left( \sqrt{x}\right)
}=\sum_{n=0}^{\infty }\left[ \left( n+1\right) b_{n+1}-\left( n+\frac{1}{2}%
\right) b_{n}\right] x^{n}.  \label{G}
\end{equation}%
Differentiation yields%
\begin{equation*}
G^{\prime }\left( x\right) =\frac{\pi }{64}e^{\mathcal{K}\left( \sqrt{x}%
\right) }g\left( x\right) ,
\end{equation*}%
where%
\begin{equation}
g\left( x\right) =F\left( \frac{3}{2},\frac{3}{2};3;x\right) +\pi F\left( 
\frac{3}{2},\frac{3}{2};2;x\right) F\left( \frac{1}{2},\frac{1}{2}%
;2;x\right) -4F\left( \frac{3}{2},\frac{3}{2};2;x\right) .  \label{g}
\end{equation}

\textbf{Step 2}: We prove that $g\left( x\right) $ is absolutely monotonic
on $\left( 0,1\right) $. Since $e^{\mathcal{K}\left( \sqrt{x}\right) }$ is
obviously absolutely monotonic on $\left( 0,1\right) $, so is $G^{\prime
}\left( x\right) $.

\textbf{Step 3}: Due to the absolute monotonicity of $G^{\prime }\left(
x\right) $, we deduce the positivity of those coefficients of power series
of $G\left( x\right) $ (\ref{G}), except for the constant term, that is, 
\begin{equation}
\left( n+1\right) b_{n+1}-\left( n+\frac{1}{2}\right) b_{n}>0\text{ \ \ or
equivalently, \ \ }\frac{b_{n+1}}{W_{n+1}}>\frac{b_{n}}{W_{n}}
\label{bn+1/bn>}
\end{equation}%
for $n\geq 1$. Moreover, we also need to prove that $\lim_{n\rightarrow
\infty }\left( b_{n}/W_{n}\right) =4$.

\textbf{Step 4}: From Step 3, we easily arrive at the absolute monotonicity
results for the functions $\pm \mathcal{G}_{p}^{\left( k\right) }\left(
x\right) $ on $\left( 0,1\right) $ for $k\geq 0$, which are contained in the
following theorem.

\begin{theorem}
\label{T}Let $\mathcal{G}_{p}$ be defined on $\left( 0,1\right) $ in (\ref%
{Fp,Gp}). Then the following statements are valid:

(i) $\mathcal{G}_{p}$ is absolutely monotonic on $\left( 0,1\right) $ if and
only if $p\leq \pi e^{\pi /2}/4=3.778...$, and $-\mathcal{G}_{p}$ is
absolutely monotonic on $\left( 0,1\right) $ if and only if $p\geq e^{\pi
/2} $.

(ii) For $k\in \mathbb{N}$, $\mathcal{G}_{p}^{\left( k\right) }$ is
absolutely monotonic on $\left( 0,1\right) $ if and only if $p\leq
b_{k}/W_{k}$, and $-\mathcal{G}_{p}^{\left( k\right) }$ is absolutely
monotonic on $\left( 0,1\right) $ if and only if $p\geq 4$.
\end{theorem}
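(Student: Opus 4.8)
The plan is to reduce the entire theorem to a single statement about the sequence $\{b_{n}/W_{n}\}_{n\ge 0}$ and then to establish the requisite properties of that sequence via the four-step scheme outlined above. The starting point is the elementary characterization of absolute monotonicity for analytic functions: a power series $\sum_{n\ge 0}c_{n}x^{n}$ with radius of convergence at least $1$ is absolutely monotonic on $\left(0,1\right)$ if and only if $c_{n}\ge 0$ for every $n$. Sufficiency is clear, since each derivative $f^{(k)}\left(x\right)=\sum_{n\ge k}c_{n}\,\frac{n!}{\left(n-k\right)!}\,x^{n-k}$ is then a nonnegative series; necessity follows by letting $x\to 0^{+}$ in $f^{(k)}$ to recover $k!\,c_{k}\ge 0$.

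Applying this to $\mathcal{G}_{p}\left(x\right)=\sum_{n\ge 0}\left(b_{n}-pW_{n}\right)x^{n}$, obtained from (\ref{ekrx,1/r1-x-ps}), I see that $\mathcal{G}_{p}$ is absolutely monotonic on $\left(0,1\right)$ exactly when $p\le \inf_{n\ge 0}\left(b_{n}/W_{n}\right)$, while $-\mathcal{G}_{p}$ is absolutely monotonic exactly when $p\ge \sup_{n\ge 0}\left(b_{n}/W_{n}\right)$. Differentiating $k$ times merely shifts the range of summation indices, so $\mathcal{G}_{p}^{\left(k\right)}$ is absolutely monotonic iff $p\le \inf_{n\ge k}\left(b_{n}/W_{n}\right)$ and $-\mathcal{G}_{p}^{\left(k\right)}$ iff $p\ge \sup_{n\ge k}\left(b_{n}/W_{n}\right)$. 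Thus the whole theorem follows once these infima and suprema are determined, which in turn rests on two structural facts: that $\{b_{n}/W_{n}\}_{n\ge 1}$ is strictly increasing, and that $b_{n}/W_{n}\to 4$.

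The monotonicity of the sequence is the heart of the matter, and it is precisely the inequality (\ref{bn+1/bn>}) supplied by Steps 1--3. Writing $G$ as in (\ref{G}), I would prove that $g\left(x\right)$ in (\ref{g}) is absolutely monotonic (Step 2), deduce that $G^{\prime }\left(x\right)=\frac{\pi}{64}e^{\mathcal{K}\left(\sqrt{x}\right)}g\left(x\right)$ is absolutely monotonic as a product of absolutely monotonic functions with strictly positive coefficients, and hence that every coefficient of $G$ beyond the constant term is positive, which via the recurrence (\ref{Wn-rr}) is equivalent to $b_{n+1}/W_{n+1}>b_{n}/W_{n}$ for $n\ge 1$. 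For the limit, the asymptotic formula (\ref{Kaf}) gives $e^{\mathcal{K}\left(\sqrt{x}\right)}/\left(1-x\right)^{-1/2}\to 4$ as $x\to 1^{-}$; combining this with the monotonicity of $\{b_{n}/W_{n}\}$ through a standard Toeplitz/Abel averaging argument applied to the ratio $\sum b_{n}x^{n}/\sum W_{n}x^{n}$ forces the increasing sequence to converge, and to the value $4$. I expect the absolute monotonicity of $g$ to be the main obstacle, since it demands delicate control of the hypergeometric coefficients; everything downstream is bookkeeping.

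It remains to pin down the two boundary values and assemble the cases. Direct computation gives $b_{0}/W_{0}=e^{\mathcal{K}\left(0\right)}=e^{\pi /2}$ and, from $\mathcal{K}\left(\sqrt{x}\right)=\pi /2+\left(\pi /8\right)x+\cdots $, $b_{1}=e^{\pi /2}\pi /8$, whence $b_{1}/W_{1}=\pi e^{\pi /2}/4$. Since $\pi e^{\pi /2}/4<4<e^{\pi /2}$, the increasing sequence $\{b_{n}/W_{n}\}_{n\ge 1}$ lies in $[\pi e^{\pi /2}/4,4)$, while the isolated term $b_{0}/W_{0}=e^{\pi /2}$ dominates all others. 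Consequently $\inf_{n\ge 0}\left(b_{n}/W_{n}\right)=b_{1}/W_{1}=\pi e^{\pi /2}/4$ and $\sup_{n\ge 0}\left(b_{n}/W_{n}\right)=b_{0}/W_{0}=e^{\pi /2}$, which yields (i); and for $k\ge 1$, $\inf_{n\ge k}\left(b_{n}/W_{n}\right)=b_{k}/W_{k}$ while $\sup_{n\ge k}\left(b_{n}/W_{n}\right)=4$, which yields (ii).
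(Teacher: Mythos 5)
Your overall architecture coincides with the paper's: the coefficientwise characterization of absolute monotonicity for power series, the reduction of both parts of the theorem to computing $\inf_{n\geq k}\left( b_{n}/W_{n}\right) $ and $\sup_{n\geq k}\left( b_{n}/W_{n}\right) $, the derivation of the monotonicity of $\left\{ b_{n}/W_{n}\right\} _{n\geq 1}$ from the positivity of the non-constant coefficients of $G$ in (\ref{G}), and the final case analysis using $\pi e^{\pi /2}/4<4<e^{\pi /2}$ are exactly the paper's Steps 1, 3 and 4, all handled correctly. One ingredient is genuinely different: for the limit $b_{n}/W_{n}\rightarrow 4$ you invoke the Abel/Toeplitz-type theorem (if $W_{n}>0$, $\sum W_{n}x^{n}\rightarrow \infty $ as $x\rightarrow 1^{-}$ and $b_{n}/W_{n}$ has a limit $L$ in the extended sense, then $\sum b_{n}x^{n}/\sum W_{n}x^{n}\rightarrow L$), combined with $\sqrt{1-x}\,e^{\mathcal{K}\left( \sqrt{x}\right) }\rightarrow 4$ from (\ref{Kaf}); the paper instead argues by contradiction twice, comparing $\mathcal{G}_{p^{\ast }}$ and $\mathcal{G}_{p_{0}}$ with polynomials and using that $\mathcal{G}_{p}\left( 1^{-}\right) $ is $-\infty $ or $+\infty $ according as $p>4$ or $p<4$. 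Both are valid; yours is shorter if one may quote the summability theorem, while the paper's is self-contained.

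The genuine gap is Step 2: you never prove that $g\left( x\right) $ in (\ref{g}) is absolutely monotonic, you only flag it as the expected main obstacle. That lemma is the crux of the whole proof: without it you have no sign information on the coefficients of $G$, hence no monotonicity of $\left\{ b_{n}/W_{n}\right\} _{n\geq 1}$, and nothing downstream — which you correctly describe as bookkeeping — can start. For the record, the paper closes this step as follows: by the linear transformation (\ref{Ltf}), $g\left( x\right) =g_{0}\left( x\right) /\left( 1-x\right) $ where $g_{0}\left( x\right) =\left( 1-x\right) F\left( \tfrac{3}{2},\tfrac{3}{2};3;x\right) +\pi F^{2}\left( \tfrac{1}{2},\tfrac{1}{2};2;x\right) -4F\left( \tfrac{1}{2},\tfrac{1}{2};2;x\right) $; expanding $g_{0}=\sum u_{n}x^{n}$ via (\ref{sF-Wn}), the sign pattern $u_{0},u_{1}>0$ and $u_{n}<0$ for $n\geq 2$ is taken from \cite{Yang-MIA-21-2018}, and $g_{0}\left( 1^{-}\right) =\sum u_{n}=0$ follows from (\ref{F2=}) and (\ref{F3/2,3/2,3-00}); consequently the coefficients of $g$ are the partial sums $v_{n}=\sum_{k=0}^{n}u_{k}$, which increase up to $n=1$ and thereafter decrease to the limit $0$, hence are strictly positive. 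Unless you supply this (or an equivalent) argument, your proposal is a correct reduction together with an honest acknowledgment of the missing core, not a proof of the theorem.
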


\begin{remark}
\label{R-AM-iff}The conclusion in Step 3 follows from the following obvious
fact:

A real power series $S\left( x\right) =\sum_{n=0}^{\infty }a_{n}x^{n}$
converging on $\left( -R,R\right) $ ($R>0$) is absolutely monotonic on $%
\left( 0,R\right) $ if and only if $a_{k}\geq 0$ for all $k\geq 0$.

In fact, if $S\left( x\right) $ is absolutely monotonic on $\left(
0,R\right) $, then $S^{\left( k\right) }\left( 0\right) \geq 0$ for all $%
k\geq 0$, which means that $a_{k}\geq 0$ for all $k\geq 0$. Conversely, if $%
a_{k}\geq 0$ for all $k\geq 0$, then $S^{\left( k\right) }\left( x\right)
\geq 0$ for $x\in \left( 0,R\right) $ and all $k\geq 0$.
\end{remark}

\section{Preliminaries and proof of Theorem \protect\ref{T}}

\subsection{Basic knowledge}

In order to prove our results, we need several basic properties of the
hypergeometric function for real $a,b,c$, where $-c\notin \mathbb{N\cup }%
\left\{ 0\right\} $.

(i) Differentiation formula%
\begin{equation}
F^{\prime }\left( a,b;c;x\right) =\frac{ab}{c}F\left( a+1,b+1;c+1;x\right) .
\label{df}
\end{equation}

(ii) The value of $x=1$:%
\begin{equation*}
F\left( a,b;c;1\right) =\dfrac{\Gamma \left( c\right) \Gamma \left(
c-a-b\right) }{\Gamma \left( c-a\right) \Gamma \left( c-b\right) },
\end{equation*}%
where $-c\notin \mathbb{N\cup }\left\{ 0\right\} $, and $c>a+b$. In
particular, we have%
\begin{equation}
F\left( \frac{1}{2},\frac{1}{2};2;1\right) =\frac{4}{\pi }.  \label{F2=}
\end{equation}

(iii) Linear transformation formula%
\begin{equation}
F\left( a,b;c;x\right) =(1-x)^{c-a-b}F(c-a,c-b;c;x).  \label{Ltf}
\end{equation}

(iv) Asymptotic formula%
\begin{equation}
F\left( a,b;a+b;x\right) =\dfrac{R\left( a,b\right) -\ln \left( 1-x\right) }{%
B\left( a,b\right) }+O\left( \left( 1-x\right) \ln \left( 1-x\right) \right) 
\text{ as }x\rightarrow 1^{-},  \label{F-near1}
\end{equation}%
where%
\begin{equation*}
B\left( a,b\right) =\frac{\Gamma \left( a\right) \Gamma \left( b\right) }{%
\Gamma \left( a+v\right) }\text{, \ }a,b>0
\end{equation*}%
is the classical beta function,%
\begin{equation}
R\equiv R\left( a,b\right) =-2\gamma -\psi \left( a\right) -\psi \left(
b\right) \text{,}  \label{R(a,b)}
\end{equation}%
here $\psi \left( x\right) =\Gamma ^{\prime }\left( x\right) /\Gamma \left(
x\right) $, $x>0$ is the psi function and $\gamma $ is the Euler-Mascheroni
constant. In particular, we have%
\begin{equation}
F\left( \frac{3}{2},\frac{3}{2};3;x;\right) \thicksim \frac{8}{\pi }\left[
4\ln 2-4-\ln \left( 1-x\right) \right] \rightarrow \infty \text{ as }%
x\rightarrow 1^{-}.  \label{F3/2,3/2,3-00}
\end{equation}

Moreover, using the notation $W_{n}$, certain specific hypergeometric
functions can be simply represented as%
\begin{equation}
\begin{array}{ll}
F\left( \dfrac{1}{2},\dfrac{1}{2};1;x\right) =\dsum\limits_{n=0}^{\infty
}W_{n}^{2}x^{n}, & F\left( \dfrac{3}{2},\dfrac{3}{2};2;x\right)
=4\dsum\limits_{n=0}^{\infty }\left( n+1\right) W_{n+1}^{2}x^{n},\bigskip \\ 
F\left( \dfrac{1}{2},\dfrac{1}{2};2;x\right) =\dsum\limits_{n=0}^{\infty }%
\dfrac{W_{n}^{2}}{n+1}x^{n}, & F\left( \dfrac{3}{2},\dfrac{3}{2};3;x\right)
=8\dsum\limits_{n=0}^{\infty }\dfrac{n+1}{n+2}W_{n+1}^{2}x^{n},%
\end{array}
\label{sF-Wn}
\end{equation}

\subsection{The first three steps}

In this subsection, we complete the proofs of Step 1--3 introduced in
Section 1.

\begin{lemma}[Step 1]
\label{L-eK-ps}Let $b_{n}$ be defined as in (\ref{ekrx,1/r1-x-ps}) Then the
formula (\ref{G}) is valid. Moreover, $b_{n}$ satisfies the recurrence
formula: $b_{0}=e^{\pi /2}$ and for $n\geq 0$,%
\begin{equation}
b_{n+1}=\frac{n}{n+1}b_{n}+\frac{\pi }{8\left( n+1\right) }\sum_{k=0}^{n}%
\frac{W_{k}^{2}}{k+1}b_{n-k}.  \label{bn+1-rr}
\end{equation}%
In particular, the first few coefficients are:%
\begin{equation}
b_{1}=\frac{1}{8}\pi e^{\pi /2}\text{, \ }b_{2}=\frac{1}{128}\pi \left( \pi
+9\right) e^{\pi /2}\text{, \ and \ }b_{3}=\frac{\pi }{3072}\left( \pi
^{2}+27\pi +150\right) e^{\pi /2}.  \notag
\end{equation}
\end{lemma}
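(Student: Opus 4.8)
The plan is to reduce both assertions to a single first-order identity for $h(x):=e^{\mathcal{K}(\sqrt{x})}=\sum_{n=0}^{\infty}b_{n}x^{n}$. First I would combine the hypergeometric representation (\ref{K-hs}), which gives $\mathcal{K}(\sqrt{x})=\tfrac{\pi}{2}F(\tfrac12,\tfrac12;1;x)$, with the differentiation formula (\ref{df}) to obtain $\tfrac{d}{dx}\mathcal{K}(\sqrt{x})=\tfrac{\pi}{8}F(\tfrac32,\tfrac32;2;x)$. The decisive step is to collapse this factor by the linear transformation formula (\ref{Ltf}) with $a=b=\tfrac32$, $c=2$ (so $c-a-b=-1$), giving $F(\tfrac32,\tfrac32;2;x)=(1-x)^{-1}F(\tfrac12,\tfrac12;2;x)$. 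Since $h'=h\cdot\tfrac{d}{dx}\mathcal{K}(\sqrt{x})$, this yields the key identity
\begin{equation*}
(1-x)h'(x)=\frac{\pi}{8}F\left(\tfrac12,\tfrac12;2;x\right)h(x),
\end{equation*}
from which both claims follow.

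To establish formula (\ref{G}), I would subtract $\tfrac12 h(x)$ from both sides, so that the bracketed closed form in (\ref{G}) equals $(1-x)h'(x)-\tfrac12 h(x)$. Writing $h'(x)=\sum_{n\geq0}(n+1)b_{n+1}x^{n}$, the product $(1-x)h'(x)$ has $x^{n}$-coefficient $(n+1)b_{n+1}-nb_{n}$ for all $n\geq0$ (the term $nb_{n}$ vanishing at $n=0$), and subtracting $\tfrac12 b_{n}$ produces exactly the coefficient $(n+1)b_{n+1}-(n+\tfrac12)b_{n}$ asserted in (\ref{G}).

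For the recurrence (\ref{bn+1-rr}), I would instead Cauchy-multiply the right side of the key identity after inserting $F(\tfrac12,\tfrac12;2;x)=\sum_{n\geq0}\tfrac{W_{n}^{2}}{n+1}x^{n}$ from (\ref{sF-Wn}). Matching the $x^{n}$-coefficient $(n+1)b_{n+1}-nb_{n}$ of the left side with $\tfrac{\pi}{8}\sum_{k=0}^{n}\tfrac{W_{k}^{2}}{k+1}b_{n-k}$ on the right and solving for $b_{n+1}$ gives (\ref{bn+1-rr}) at once. The initial value is $b_{0}=e^{\mathcal{K}(0)}=e^{\pi/2}$ because $\mathcal{K}(0)=\tfrac{\pi}{2}F(\tfrac12,\tfrac12;1;0)=\tfrac{\pi}{2}$, and the stated values of $b_{1},b_{2},b_{3}$ come from iterating (\ref{bn+1-rr}) with $W_{0}=1$, $W_{1}=\tfrac12$, $W_{2}=\tfrac38$.

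No step presents a genuine obstacle: the lemma rests entirely on recognizing that (\ref{Ltf}) turns the differentiated factor $F(\tfrac32,\tfrac32;2;x)$ into $(1-x)^{-1}F(\tfrac12,\tfrac12;2;x)$, after which everything is routine Cauchy-product algebra. The only point demanding attention is the index bookkeeping in $(1-x)h'(x)$---specifically that the $nb_{n}$ contribution disappears at $n=0$, so that the constant and all higher coefficients in (\ref{G}) share the single uniform form $(n+1)b_{n+1}-(n+\tfrac12)b_{n}$.
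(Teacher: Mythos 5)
Your proposal is correct and follows essentially the same route as the paper: both derive the key identity $(1-x)\bigl[e^{\mathcal{K}(\sqrt{x})}\bigr]'=\tfrac{\pi}{8}F\left(\tfrac12,\tfrac12;2;x\right)e^{\mathcal{K}(\sqrt{x})}$ via the differentiation formula (\ref{df}) and the linear transformation (\ref{Ltf}), then read off (\ref{G}) by subtracting $\tfrac12 e^{\mathcal{K}(\sqrt{x})}$ and obtain (\ref{bn+1-rr}) by Cauchy-multiplying with the series for $F\left(\tfrac12,\tfrac12;2;x\right)$ from (\ref{sF-Wn}) and comparing coefficients. The index bookkeeping and the evaluation $b_{0}=e^{\pi/2}$ are handled exactly as in the paper's proof.
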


\begin{proof}
Differentiation yields%
\begin{equation*}
\left[ e^{\mathcal{K}\left( \sqrt{x}\right) }\right] ^{\prime }=\frac{\pi }{8%
}F\left( \frac{3}{2},\frac{3}{2};2;x\right) e^{\mathcal{K}\left( \sqrt{x}%
\right) }=\sum_{n=0}^{\infty }nb_{n}x^{n-1},
\end{equation*}%
which, by (\ref{Ltf}), implies that%
\begin{equation}
\frac{\pi }{8}F\left( \frac{1}{2},\frac{1}{2};2;x\right) e^{\mathcal{K}%
\left( \sqrt{x}\right) }=\left( 1-x\right) \sum_{n=0}^{\infty
}nb_{n}x^{n-1}=\sum_{n=0}^{\infty }\left[ \left( n+1\right) b_{n+1}-nb_{n}%
\right] x^{n}.  \label{FeK-ps}
\end{equation}%
Hence,%
\begin{equation*}
G\left( x\right) =\frac{\pi }{8}F\left( \frac{1}{2},\frac{1}{2};2;x\right)
e^{\mathcal{K}\left( \sqrt{x}\right) }-\frac{1}{2}e^{\mathcal{K}\left( \sqrt{%
x}\right) }=\sum_{n=0}^{\infty }\left[ \left( n+1\right) b_{n+1}-\left( n+%
\frac{1}{2}\right) b_{n}\right] x^{n},
\end{equation*}%
which proves the formula (\ref{G}).

On the other hand, applying the third formula of (\ref{sF-Wn}) and Cauchy
product formula to (\ref{FeK-ps}) gives 
\begin{equation*}
\frac{\pi }{8}\sum_{n=0}^{\infty }\left( \sum_{k=0}^{n}\frac{W_{k}^{2}}{k+1}%
b_{n-k}\right) x^{n}=\sum_{n=0}^{\infty }\left[ \left( n+1\right)
b_{n+1}-nb_{n}\right] x^{n},
\end{equation*}%
then comparing coefficients of $x^{n}$ leads to%
\begin{equation*}
\frac{\pi }{8}\sum_{k=0}^{n}\frac{W_{k}^{2}}{k+1}b_{n-k}=\left( n+1\right)
b_{n+1}-nb_{n},
\end{equation*}%
which implies (\ref{bn+1-rr}). Letting $x\rightarrow 0^{+}$ leads us to $%
b_{0}=e^{\pi /2}$, and the first few coefficients follows from the
recurrence formula (\ref{bn+1-rr}), which completes the proof.
\end{proof}

\begin{lemma}[Step 2]
\label{L-dG-AM}Let $G\left( x\right) $ be defined by (\ref{G}). Then $%
G^{\prime }\left( x\right) $ is absolutely monotonic on $\left( 0,1\right) $.
\end{lemma}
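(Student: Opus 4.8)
The plan is to reduce the absolute monotonicity of $G'$ to that of $g$, and then to verify that every Maclaurin coefficient of $g$ is nonnegative. Since $G'(x)=\frac{\pi}{64}e^{\mathcal{K}(\sqrt{x})}g(x)$ and $e^{\mathcal{K}(\sqrt{x})}=\sum_{n\ge0}b_nx^n$ has nonnegative coefficients (being the exponential of the series $\mathcal{K}(\sqrt{x})=\frac{\pi}{2}\sum_{n\ge0}W_n^2x^n$, whose coefficients are nonnegative), the product of $e^{\mathcal{K}(\sqrt{x})}$ with any power series having nonnegative coefficients again has nonnegative coefficients. Hence, by Remark~\ref{R-AM-iff}, it suffices to show that the Maclaurin coefficients of $g$ are all nonnegative.

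First I would extract those coefficients from (\ref{g}) using the expansions in (\ref{sF-Wn}). Writing $a_n=(n+1)W_{n+1}^2$ and $d_n=W_n^2/(n+1)$, so that $F(3/2,3/2;2;x)=4\sum a_nx^n$, $F(1/2,1/2;2;x)=\sum d_nx^n$ and $F(3/2,3/2;3;x)=\sum\frac{8a_n}{n+2}x^n$, the Cauchy product applied to the middle term of (\ref{g}) yields the coefficient of $x^n$ in $g$ as
\begin{equation*}
c_n=\frac{8a_n}{n+2}-16a_n+4\pi\sum_{k=0}^{n}a_kd_{n-k}=4\pi S_n-8a_n\frac{2n+3}{n+2},\qquad S_n:=\sum_{k=0}^{n}a_kd_{n-k}.
\end{equation*}
(The linear transformation (\ref{Ltf}) gives $F(1/2,1/2;2;x)=(1-x)F(3/2,3/2;2;x)$, so the product term equals $\pi(1-x)F(3/2,3/2;2;x)^2$; this explains why the two terms of (\ref{g}) that are singular at $x=1$ cancel.) Thus $g$ is absolutely monotonic if and only if
\begin{equation*}
\pi S_n\ge 2a_n\frac{2n+3}{n+2}\qquad\text{for all }n\ge0 .
\end{equation*}

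Next I would recast this convolution inequality in a form suited to estimation. Using $W_{n+1}=\frac{2n+1}{2n+2}W_n$ one checks $a_n=\frac{(2n+1)^2}{4}d_n$, that $a_n$ is strictly increasing with $a_n\uparrow 1/\pi$, and, by (\ref{F2=}), that $\sum_{j\ge0}d_j=F(1/2,1/2;2;1)=4/\pi$. Since the $k=n$ term contributes $a_nd_0=a_n$, the target is equivalent to the two-sided statement
\begin{equation*}
0\le \frac{4}{\pi}a_n-S_n=\sum_{j=0}^{n}(a_n-a_{n-j})d_j+a_n\sum_{j>n}d_j\le\frac{2a_n}{\pi(n+2)},
\end{equation*}
the left inequality being immediate from $a_{n-j}\le a_n$ and $d_j\ge0$.

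The right-hand inequality is the crux, and it is genuinely delicate because it is asymptotically tight: one has $S_n/a_n\to4/\pi$ and $\frac{2}{\pi}\frac{2n+3}{n+2}\to4/\pi$, so the margin $\frac{4}{\pi}-S_n/a_n$ must be beaten by the quantity $\frac{2}{\pi(n+2)}$, which vanishes at the same rate. A crude bound such as replacing $a_{n-j}$ by $a_0$ only gives $S_n/a_n\gtrsim1$, far below $4/\pi$; the estimate must therefore capture that $a_{n-j}$ is close to $a_n$ for the small indices $j$ that dominate the summable sequence $d_j$. I would control the tail $\sum_{j>n}d_j$ and the differences $a_n-a_{n-j}$ simultaneously by means of sharp two-sided bounds for $W_j$ (equivalently for $d_j=\frac{\Gamma(j+1/2)^2}{\pi\,\Gamma(j+1)^2(j+1)}$ and its partial sums), exploiting the telescoping identity $\sum_{i=0}^{j-1}\frac{1}{4(n-i)(n-i+1)}=\frac{j}{4(n-j+1)(n+1)}$ that issues from $a_{k+1}/a_k=1+\frac{1}{4(k+1)(k+2)}$. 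An alternative, and perhaps cleaner, route is to derive a finite-order recurrence for $S_n$ from the third-order symmetric-square differential equation satisfied by $F(3/2,3/2;2;x)^2$ and then prove $c_n\ge0$ by induction with a suitably strengthened hypothesis. Either way, obtaining constants tight enough to beat the $\frac{2}{\pi(n+2)}$ threshold for every $n$ is the main obstacle; the finitely many small values of $n$, where the margin is largest (e.g. $c_0=\pi-3>0$), can be checked directly.
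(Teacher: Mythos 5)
Your reduction of the lemma to the nonnegativity of the Maclaurin coefficients of $g$ is correct and is exactly how the paper's proof begins, and your coefficient formula $c_n=4\pi S_n-8a_n\tfrac{2n+3}{n+2}$ is accurate. But the proposal stops precisely where the real work starts: the inequality $\pi S_n\geq 2a_n\tfrac{2n+3}{n+2}$ is never established. You correctly diagnose that it is asymptotically tight (both $S_n/a_n$ and the threshold tend to $4/\pi$ with margin $O(1/n)$), and you then offer only two candidate strategies --- sharp two-sided bounds on $W_j$ with a telescoping identity, or an ODE-derived recurrence plus a strengthened induction --- without carrying either out and without any evidence that either can produce constants beating the $\tfrac{2}{\pi(n+2)}$ threshold uniformly in $n$. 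Since this single estimate is the entire content of the lemma, what you have is a correct setup plus an unproven claim: a genuine gap.

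The paper sidesteps the tight estimate by the move you gesture at when you observe the cancellation of the singular terms, but pushed one step further: instead of expanding $g$ itself, it expands
\begin{equation*}
g_0(x)=(1-x)g(x)=(1-x)F\left(\tfrac{3}{2},\tfrac{3}{2};3;x\right)+\pi F^{2}\left(\tfrac{1}{2},\tfrac{1}{2};2;x\right)-4F\left(\tfrac{1}{2},\tfrac{1}{2};2;x\right)=\sum_{n=0}^{\infty}u_nx^n,
\quad u_n=\pi\sum_{k=0}^{n}d_kd_{n-k}-6\frac{2n+1}{(n+1)(n+2)}W_n^2,
\end{equation*}
so that the coefficients of $g=g_0/(1-x)$ are the partial sums $v_n=\sum_{k=0}^{n}u_k$. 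Two inputs then finish the proof softly: (a) the sign pattern $u_0,u_1>0$ and $u_n<0$ for all $n\geq 2$, imported from \cite[Lemma 2.5 and Eqs.\ (3.6)--(3.8)]{Yang-MIA-21-2018}; and (b) the exact identity $\sum_{n\geq0}u_n=g_0(1^-)=0$, which follows from the boundary values (\ref{F2=}) and (\ref{F3/2,3/2,3-00}). Hence for $n\geq2$ the $v_n$ strictly decrease to the limit $0$, so $v_n>0$ for all $n$ (with $v_0,v_1$ checked directly). Note where your tightness went: given (b), your inequality $c_n\geq0$ is equivalent to $\sum_{k>n}u_k\leq0$, which for $n\geq1$ is immediate from (a); and (a) itself is \emph{not} a borderline inequality, since $\pi\sum_k d_kd_{n-k}\sim\frac{8}{\pi n^2}$ while the subtracted term is $\sim\frac{12}{\pi n^2}$. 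The exact boundary identity is what absorbs the borderline behaviour that blocks your direct attack; without it (or the equivalent prior result on the $u_n$), neither of your sketched routes is likely to close.
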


\begin{proof}
Differentiation yields%
\begin{equation*}
G^{\prime }\left( x\right) =\frac{\pi }{64}e^{\mathcal{K}\left( \sqrt{x}%
\right) }g\left( x\right) ,
\end{equation*}%
where $g\left( x\right) $ is defined by (\ref{g}). Since $e^{\mathcal{K}%
\left( \sqrt{x}\right) }$ is obviously absolutely monotonic on $\left(
0,1\right) $, if we prove that $g\left( x\right) $ is absolutely monotonic
on $\left( 0,1\right) $, then by Remark \ref{R-AM-iff}, so is $G^{\prime
}\left( x\right) $.

Let%
\begin{equation*}
g_{0}\left( x\right) =\left( 1-x\right) F\left( \frac{3}{2},\frac{3}{2}%
;3;x\right) +\pi F^{2}\left( \frac{1}{2},\frac{1}{2};2;x\right) -4F\left( 
\frac{1}{2},\frac{1}{2};2;x\right) .
\end{equation*}%
By the third and fourth formulas of (\ref{sF-Wn}), $g_{0}\left( x\right) $
can be expressed as%
\begin{equation*}
g_{0}\left( x\right) =\sum_{n=0}^{\infty }u_{n}x^{n},
\end{equation*}%
where%
\begin{equation}
u_{n}=\pi \sum_{k=0}^{n}\frac{W_{k}^{2}W_{n-k}^{2}}{\left( k+1\right) \left(
n-k+1\right) }-6\frac{2n+1}{\left( n+2\right) \left( n+1\right) }W_{n}^{2}.
\label{un}
\end{equation}%
It was proved in \cite[Lemma 2.5]{Yang-MIA-21-2018} that $u_{n}<0$ for all $%
n\geq 8$. This together with \cite[Eqs. (3.6)--(3.8)]{Yang-MIA-21-2018}
gives that $u_{0}=\pi -3>0$, $u_{1}=\left( \pi -3\right) /4>0$ and $u_{n}<0$
for all $n\geq 2$. Moreover, by\ (\ref{F2=}) and (\ref{F3/2,3/2,3-00}), we
have%
\begin{equation*}
g_{0}\left( 1^{-}\right) =\sum_{n=0}^{\infty }u_{n}=0.
\end{equation*}

On the other hand, applying the linear transformation formula (\ref{Ltf})
and the Cauchy product formula gives%
\begin{eqnarray*}
g\left( x\right) &=&F\left( \frac{3}{2},\frac{3}{2};3;x\right) +\pi F\left( 
\frac{3}{2},\frac{3}{2};2;x\right) F\left( \frac{1}{2},\frac{1}{2}%
;2;x\right) -4F\left( \frac{3}{2},\frac{3}{2};2;x\right) \\
&=&\frac{g_{0}\left( x\right) }{1-x}=\frac{1}{1-x}\sum_{n=0}^{\infty
}u_{n}x^{n}=\sum_{n=0}^{\infty }\left( \sum_{k=0}^{n}u_{k}\right)
x^{n}:=\sum_{n=0}^{\infty }v_{n}x^{n}.
\end{eqnarray*}%
It is verified that $v_{0}=u_{0}=\pi -3>0$, $v_{1}=u_{0}+u_{1}=5\left( \pi
-3\right) /4>0$. For $n\geq 2$, since%
\begin{equation*}
v_{n}-v_{n-1}=\sum_{k=0}^{n}u_{k}-\sum_{k=0}^{n-1}u_{k}=u_{n}<0,
\end{equation*}%
it is readily deduced that%
\begin{equation*}
v_{n}>\lim_{n\rightarrow \infty }v_{n}=\sum_{n=0}^{\infty }u_{n}=0.
\end{equation*}%
This proves the absolute monotonicity of $g\left( x\right) $ on $\left(
0,1\right) $, thereby completing the proof.
\end{proof}

\begin{lemma}[Step 3]
\label{L-bn/Wn-i}Let $b_{n}$ be defined by (\ref{bn+1-rr}). Then the
sequence $\left\{ b_{n}/W_{n}\right\} _{n\geq 1}$ is increasing with%
\begin{equation}
\lim_{n\rightarrow \infty }\frac{b_{n}}{W_{n}}=4.  \label{bn/Wn->4}
\end{equation}
\end{lemma}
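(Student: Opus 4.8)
The plan is to extract the monotonicity of $\{b_n/W_n\}_{n\ge 1}$ directly from Step 2, and then to evaluate the limit by a weighted Abelian (Tauberian-type) argument anchored at the singularity $x\to 1^-$. First I would read the sign information off the absolute monotonicity of $G'$. By (\ref{G}) we have $G(x)=\sum_{n\ge 0}c_n x^n$ with $c_n=(n+1)b_{n+1}-(n+\tfrac12)b_n$, so that $G'(x)=\sum_{n\ge 1}nc_n x^{n-1}$. Lemma \ref{L-dG-AM} shows $G'$ is absolutely monotonic on $(0,1)$; moreover, in its proof $g(x)=\sum v_n x^n$ has $v_n>0$ for all $n$, while $e^{\mathcal{K}(\sqrt x)}=\sum b_n x^n$ has $b_n>0$ for all $n$ (immediate from $b_0=e^{\pi/2}>0$ and the positivity of every term in the recurrence (\ref{bn+1-rr})). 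Hence the Cauchy product $G'=\tfrac{\pi}{64}e^{\mathcal{K}(\sqrt x)}g$ has strictly positive coefficients, so $nc_n>0$, i.e. $c_n>0$, for every $n\ge 1$. Using $W_{n+1}=\tfrac{n+1/2}{n+1}W_n$ from (\ref{Wn-rr}), the inequality $(n+1)b_{n+1}-(n+\tfrac12)b_n>0$ is equivalent to $b_{n+1}/W_{n+1}>b_n/W_n$, which is exactly (\ref{bn+1/bn>}). Thus $\{b_n/W_n\}_{n\ge 1}$ is strictly increasing.

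Since $\{b_n/W_n\}_{n\ge 1}$ is increasing, its limit $L=\lim_{n\to\infty}b_n/W_n$ exists in $(b_1/W_1,+\infty]$. To evaluate $L$ I would use the weights $W_n>0$ together with the fact that $\sum_{n\ge 0}W_n x^n=(1-x)^{-1/2}\to+\infty$ as $x\to 1^-$ (equivalently $\sum W_n=\infty$, since $W_n\sim(\pi n)^{-1/2}$). For a monotone sequence $a_n:=b_n/W_n$ with limit $L$, one has the elementary weighted-average (Abelian) limit
\[
\frac{\sum_{n\ge 0}a_n W_n x^n}{\sum_{n\ge 0}W_n x^n}\longrightarrow L\qquad(x\to 1^-),
\]
obtained by splitting the sum at an index $N$ beyond which $a_n$ is within $\varepsilon$ of $L$ and letting the (bounded) head be swamped by the denominator $\to\infty$. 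Because $\sum a_n W_n x^n=\sum b_n x^n=e^{\mathcal{K}(\sqrt x)}$, the left-hand side is precisely $\sqrt{1-x}\,e^{\mathcal{K}(\sqrt x)}$, whence $\sqrt{1-x}\,e^{\mathcal{K}(\sqrt x)}\to L$. (If $L=+\infty$ the same splitting shows the ratio diverges, which the next step rules out.)

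It remains to compute this same boundary limit independently. From (\ref{F-near1}) with $a=b=\tfrac12$, $c=1$, using $B(\tfrac12,\tfrac12)=\pi$ and $R(\tfrac12,\tfrac12)=4\ln 2$, one gets $F(\tfrac12,\tfrac12;1;x)=\tfrac1\pi\bigl(\ln 16-\ln(1-x)\bigr)+O\bigl((1-x)\ln(1-x)\bigr)$, hence
\[
\mathcal{K}(\sqrt x)=\tfrac{\pi}{2}F\!\left(\tfrac12,\tfrac12;1;x\right)=\ln\frac{4}{\sqrt{1-x}}+o(1)\qquad(x\to 1^-),
\]
which is the precise, difference-form refinement of (\ref{Kaf}). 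Therefore $\sqrt{1-x}\,e^{\mathcal{K}(\sqrt x)}=4\,e^{o(1)}\to 4$. Comparing with the previous paragraph forces $L=4$ (in particular $L<\infty$), which is (\ref{bn/Wn->4}) and completes the proof.

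The main obstacle is the limit evaluation rather than the monotonicity: one must justify the weighted Abelian passage $\sqrt{1-x}\,e^{\mathcal{K}(\sqrt x)}\to L$, which is clean precisely because the sequence is monotone and $\sum W_n x^n\to\infty$, and one must upgrade the asymptotic (\ref{Kaf}) from a ratio statement to the difference statement $\mathcal{K}(\sqrt x)-\ln(4/\sqrt{1-x})\to 0$ so that it survives exponentiation. Both points are handled by the expansion (\ref{F-near1}).
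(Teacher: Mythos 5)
Your proof is correct, and while its first half follows the paper, its second half takes a genuinely different route. For the monotonicity you do essentially what the paper does: read the sign of the coefficients $(n+1)b_{n+1}-\left( n+\tfrac{1}{2}\right) b_{n}$ of $G$ in (\ref{G}) off the absolute monotonicity of $G'$ (Lemma \ref{L-dG-AM}); your additional observation that $G'=\tfrac{\pi}{64}e^{\mathcal{K}(\sqrt{x})}g$ is a Cauchy product of series with strictly positive coefficients ($b_n>0$ by induction on (\ref{bn+1-rr}), $v_n>0$ from the proof of Lemma \ref{L-dG-AM}) is a small refinement, since Remark \ref{R-AM-iff} by itself only yields nonnegativity, not the strict inequality (\ref{bn/Wn-ip}). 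The limit (\ref{bn/Wn->4}) is where you diverge. The paper argues twice by contradiction: if $\lim b_n/W_n>4$, it takes $p^{\ast}=b_{n_0}/W_{n_0}>4$, notes that all but finitely many coefficients of $\mathcal{G}_{p^{\ast}}$ are nonnegative so that $\mathcal{G}_{p^{\ast}}$ is bounded below near $x=1$, contradicting $\mathcal{G}_{p^{\ast}}(1^-)=-\infty$; a symmetric argument excludes a finite limit $p_0<4$. You instead evaluate the limit directly: since $b_n/W_n\to L$ and $\sum W_n x^n=(1-x)^{-1/2}\to\infty$, an elementary Abelian weighted-average lemma gives $\sum b_n x^n/\sum W_n x^n=\sqrt{1-x}\,e^{\mathcal{K}(\sqrt{x})}\to L$, while the expansion (\ref{F-near1}) with $a=b=\tfrac{1}{2}$, $c=1$ gives $\sqrt{1-x}\,e^{\mathcal{K}(\sqrt{x})}\to 4$, forcing $L=4$. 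Your route buys directness (no case analysis, and monotonicity is only used to guarantee that $L$ exists) and it makes explicit a point the paper glosses over: one needs the difference form $\mathcal{K}(\sqrt{x})-\ln \left( 4/\sqrt{1-x}\right) \to 0$, not merely the ratio asymptotic (\ref{Kaf}), to justify replacing $e^{\mathcal{K}(\sqrt{x})}$ by $4/\sqrt{1-x}$ in boundary limits -- the paper silently performs exactly this replacement when it writes $\mathcal{G}_{p^{\ast}}(1^-)=\lim_{x\rightarrow 1^{-}}\left[ e^{\ln (4/\sqrt{1-x})}-p^{\ast}/\sqrt{1-x}\right]$. What the paper's route buys in exchange is self-containedness at a lower technical level: it needs only sign and boundedness comparisons of power series, with no summability lemma.
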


\begin{proof}
To prove the increasing property of the sequence $\left\{
b_{n}/W_{n}\right\} _{n\geq 1}$, it suffices to prove the inequality 
\begin{equation*}
b_{n+1}>\frac{W_{n+1}}{W_{n}}b_{n}=\frac{n+1/2}{n+1}b_{n},
\end{equation*}%
or equivalently, 
\begin{equation}
\left( n+1\right) b_{n+1}-\left( n+\frac{1}{2}\right) b_{n}>0
\label{bn/Wn-ip}
\end{equation}%
for $n\in \mathbb{N}$. By Lemma \ref{L-dG-AM}, $G^{\prime }\left( x\right) $
is absolutely monotonic on $\left( 0,1\right) $, which together with (\ref{G}%
) and Remark \ref{R-AM-iff}\ implies (\ref{bn/Wn-ip}).

Since the sequence $\left\{ b_{n}/W_{n}\right\} _{n\geq 1}$ is increasing
and $b_{1}/W_{1}>0$, the $\lim\limits_{n\rightarrow \infty
}(b_{n}/W_{n})=p_{0}>0$ or tends to $\infty $. We first prove that $%
\lim_{n\rightarrow \infty }\left( b_{n}/W_{n}\right) \leq 4$. If not, that
is, $\lim_{n\rightarrow \infty }\left( b_{n}/W_{n}\right) >4$, then there
exists $n_{0}>1$ such that $b_{n}/W_{n}>4$ for $n\geq n_{0}$. Let us denote
by $p^{\ast }=b_{n_{0}}/W_{n_{0}}$. The increasing property of $%
\{b_{n}/W_{n}\}_{n\geq 1}$ makes us to deduce 
\begin{equation*}
\frac{b_{n}}{W_{n}}-p^{\ast }=\frac{b_{n}}{W_{n}}-\frac{b_{n_{0}}}{W_{n_{0}}}%
\geq 0\text{, that is,}\quad b_{n}-p^{\ast }W_{n}\geq 0\text{ for }n\geq
n_{0}\text{,}
\end{equation*}%
which in conjunction with (\ref{ekrx,1/r1-x-ps}) yields 
\begin{equation*}
\mathcal{G}_{p^{\ast }}(x)=\sum_{n=0}^{\infty }\left( b_{n}-p^{\ast
}W_{n}\right) x^{n}\geq \sum_{n=0}^{n_{0}-1}(b_{n}-p^{\ast }W_{n})x^{n},
\end{equation*}%
which contradicts with 
\begin{equation*}
\mathcal{G}_{p^{\ast }}(1^{-})=\lim_{x\rightarrow 1^{-}}\left[ e^{\ln (4/%
\sqrt{1-x})}-\frac{p^{\ast }}{\sqrt{1-x}}\right] =-\infty .
\end{equation*}%
If $\lim\limits_{n\rightarrow \infty }(b_{n}/W_{n})=p_{0}<4$, that is to
say, $\sup_{n\geq 1}\{b_{n}/W_{n}\}=p_{0}$, then $b_{n}/W_{n}\leq p_{0}$ for
all $n\geq 1$. Similarly, 
\begin{equation*}
\mathcal{G}_{p_{0}}(x)=\sum_{n=0}^{\infty }\left( b_{n}-p_{0}W_{n}\right)
x^{n}\leq b_{0}-p_{0}W_{0}
\end{equation*}%
which is a contradiction with $\mathcal{G}_{p_{0}}(1^{-})=\infty $ due to $%
p_{0}<4$. This proves $\lim\limits_{n\rightarrow \infty }(b_{n}/W_{n})=4$,
and the proof is done.
\end{proof}

\subsection{Step 4: Proof of Theorem \protect\ref{T}}

By means of Lemma (Step 3) \ref{L-bn/Wn-i}, we can easily complete the proof
of our main result.

\begin{proof}[\textit{Proof of Theorem \protect\ref{T}}]
From (\ref{ekrx,1/r1-x-ps}), $\mathcal{G}_{p}\left( x\right) $ can be
expressed in power series: 
\begin{equation}
\mathcal{G}_{p}\left( x\right) =\sum_{n=0}^{\infty
}b_{n}x^{n}-p\sum_{n=0}^{\infty }W_{n}x^{n}=\sum_{n=0}^{\infty }\left(
b_{n}-pW_{n}\right) x^{n}:=\sum_{n=0}^{\infty }c_{n}\left( p\right) x^{n}.
\label{Gp-ps}
\end{equation}

As a matter to remember, Lemma \ref{L-bn/Wn-i} states that the sequence $%
\{b_{n}/W_{n}\}$ is strictly increasing for $n\geq 1$ with $%
\lim\limits_{n\rightarrow \infty }(b_{n}/W_{n})=4$.

(i) By (\ref{Gp-ps}), $\mathcal{G}_{p}$ is absolutely monotonic on $\left(
0,1\right) $ if and only if 
\begin{equation*}
c_{n}(p)=b_{n}-pW_{n}\geq 0\text{ for}\quad n\geq 0,
\end{equation*}%
which implies that 
\begin{equation*}
p\leq \inf_{n\geq 0}\left\{ \frac{b_{n}}{W_{n}}\right\} =\min \left\{ \frac{%
b_{0}}{W_{0}},\frac{b_{1}}{W_{1}}\right\} =\min \left\{ e^{\pi /2},\frac{\pi 
}{4}e^{\pi /2}\right\} =\frac{\pi }{4}e^{\pi /2}.
\end{equation*}%
Similarly, $-\mathcal{G}_{p}$ is absolutely monotonic on $\left( 0,1\right) $
if and only if 
\begin{equation*}
p\geq \sup_{n\geq 0}\left\{ \frac{b_{n}}{W_{n}}\right\} =\max \left\{ \frac{%
b_{0}}{W_{0}},4\right\} =\max \left\{ e^{\pi /2},4\right\} =e^{\pi /2}.
\end{equation*}%
(ii) In a fashion to \textit{(i)}, $\mathcal{G}_{p}^{(k)}$ for $k\geq 1$ is
absolutely monotonic on $(0,1)$ if and only if 
\begin{equation*}
p\leq \inf_{n\geq k}\left\{ \frac{b_{n}}{W_{n}}\right\} =\frac{b_{k}}{W_{k}};
\end{equation*}%
and $-\mathcal{G}_{p}^{(k)}$ is absolutely monotonic on $(0,1)$ if and only
if 
\begin{equation*}
p\geq \sup_{n\geq k}\left\{ \frac{b_{n}}{W_{n}}\right\} =4.
\end{equation*}

This completes the proof.
\end{proof}

\section{New functional inequalities related to $\mathcal{K}(r)$}

In this Section, Theorem \ref{T} will be applied to establish several
functional inequalities involving the complete elliptic integral of the
first kind $\mathcal{K}(r)$.

\begin{proposition}
\label{P1} Let $b_{n}$ be defined in (\ref{ekrx,1/r1-x-ps}) with $%
b_{0}=e^{\pi /2}$ and $c_{n}(p)=b_{n}-pW_{n}$ for $n\geq 0$. The double
inequality 
\begin{equation}
\ln \left( \frac{p}{r^{\prime }}+\sum_{n=0}^{m}c_{n}(p)r^{2n}\right) <%
\mathcal{K}(r)<\ln \left( \frac{q}{r^{\prime }}+\sum_{n=0}^{m}c_{n}(q)r^{2n}%
\right)  \label{K<>I1}
\end{equation}%
holds for $r\in (0,1)$ if and only if $p\leq b_{m+1}/W_{m+1}$ and $q\geq 4$.
\end{proposition}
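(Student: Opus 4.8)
The plan is to exponentiate both sides of (\ref{K<>I1}) and reduce each inequality to a sign condition on a \emph{tail} of the power series of $\mathcal{G}_{p}$, then read off the answer from Lemma \ref{L-bn/Wn-i}. Setting $x=r^{2}$ so that $r^{\prime }=\sqrt{1-x}$ and $r^{2n}=x^{n}$, and using $\mathcal{K}(r)=\ln e^{\mathcal{K}(\sqrt{x})}$ together with the strict monotonicity of $\ln$ (the logarithmic arguments being positive throughout the stated range, which is routine to check), the double inequality (\ref{K<>I1}) is equivalent to
\[
\frac{p}{\sqrt{1-x}}+\sum_{n=0}^{m}c_{n}(p)x^{n}<e^{\mathcal{K}(\sqrt{x})}<\frac{q}{\sqrt{1-x}}+\sum_{n=0}^{m}c_{n}(q)x^{n}\qquad (x\in(0,1)).
\]

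Next I would insert the power series $e^{\mathcal{K}(\sqrt{x})}=\sum_{n\geq 0}b_{n}x^{n}$ and $1/\sqrt{1-x}=\sum_{n\geq 0}W_{n}x^{n}$ from (\ref{ekrx,1/r1-x-ps}). Since $c_{n}(p)=b_{n}-pW_{n}$, the coefficient of $x^{n}$ in $p/\sqrt{1-x}+\sum_{n=0}^{m}c_{n}(p)x^{n}$ equals $b_{n}$ for $n\leq m$ and $pW_{n}$ for $n>m$; subtracting $\sum_{n\geq 0}b_{n}x^{n}$ therefore collapses the left inequality to $\sum_{n=m+1}^{\infty }c_{n}(p)x^{n}>0$ on $(0,1)$, and, symmetrically, the right inequality to $\sum_{n=m+1}^{\infty }c_{n}(q)x^{n}<0$ on $(0,1)$. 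Everything now turns on the sign of $c_{n}(p)=W_{n}(b_{n}/W_{n}-p)$, for which Lemma \ref{L-bn/Wn-i} is exactly tailored: $\{b_{n}/W_{n}\}_{n\geq 1}$ increases strictly to $4$.

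For sufficiency, if $p\leq b_{m+1}/W_{m+1}$ then $b_{n}/W_{n}\geq b_{m+1}/W_{m+1}\geq p$ for every $n\geq m+1$, so $c_{n}(p)\geq 0$ there with strictly positive terms beyond $n=m+1$, whence the $p$-tail is positive on $(0,1)$; and if $q\geq 4$ then $b_{n}/W_{n}<4\leq q$ for all $n\geq 1$, giving $c_{n}(q)<0$ and a negative $q$-tail. The necessity of $p\leq b_{m+1}/W_{m+1}$ is the easy local converse: if $p>b_{m+1}/W_{m+1}$ then $c_{m+1}(p)<0$, and since this is the lowest-order term of the $p$-tail, that tail is negative for all sufficiently small $x>0$, so the inequality fails near $0$.

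The only step that cannot be settled by a local expansion is the necessity of $q\geq 4$, and this is where I expect the real work to lie. Here I would use the boundary asymptotics: if $q<4$, then by (\ref{Kaf}) one has $\mathcal{G}_{q}(x)=e^{\mathcal{K}(\sqrt{x})}-q/\sqrt{1-x}\sim (4-q)/\sqrt{1-x}\to +\infty$ as $x\to 1^{-}$, while the finite sum $\sum_{n=0}^{m}c_{n}(q)x^{n}$ stays bounded; consequently the $q$-tail $\mathcal{G}_{q}(x)-\sum_{n=0}^{m}c_{n}(q)x^{n}\to +\infty$ and is in particular positive as $x\to 1^{-}$, contradicting the required negativity. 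This contradiction-at-the-endpoint argument is essentially the same device already used in the proof of Lemma \ref{L-bn/Wn-i}; once it is in place, the proposition follows by assembling the four implications above.
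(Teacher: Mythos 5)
Your proposal is correct and follows essentially the same route as the paper's proof: both reduce (\ref{K<>I1}) to the sign of the tail $\mathcal{R}_{p,m}(x)=\sum_{n=m+1}^{\infty}c_{n}(p)x^{n}$, obtain sufficiency from the sign pattern of the $c_{n}$ (your direct appeal to Lemma \ref{L-bn/Wn-i} is equivalent to the paper's citation of Theorem \ref{T}(ii)), and prove necessity by examining the lowest-order coefficient as $x\rightarrow 0^{+}$ for $p$ and the asymptotics $e^{\mathcal{K}(\sqrt{x})}\sim 4/\sqrt{1-x}$ as $x\rightarrow 1^{-}$ for $q$. The only cosmetic difference is that you state the necessity arguments as contrapositives, whereas the paper phrases them as limit conditions $\mathcal{R}_{p,m}(0^{+})\geq 0$ and $\mathcal{R}_{q,m}(1^{-})\leq 0$.
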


\begin{proof}
As shown in (\ref{Gp-ps}), we denote by 
\begin{equation}
\mathcal{R}_{p,m}(x):=e^{\mathcal{K}(\sqrt{x})}-\frac{p}{\sqrt{1-x}}%
-\sum_{n=0}^{m}c_{n}(p)x^{n}=\sum_{n=m+1}^{\infty }c_{n}(p)x^{n}.
\label{Rp,m}
\end{equation}%
Theorem \ref{T} (ii) enables us to know that $c_{n}(p)\geq 0$ for $n\geq m+1$
if $p\leq b_{m+1}/W_{m+1}$ and $c_{n}(q)\leq 0$ for $n\geq m+1$ if $q\geq 4$%
. This together with (\ref{Rp,m}) gives the desired inequality (\ref{K<>I1}).

Conversely, the necessary conditions that (\ref{K<>I1}) holds for all $r\in
(0,1)$ require to satisfy $\mathcal{R}_{p,m}(0^{+})\geq 0$ and $\mathcal{R}%
_{q,m}(1^{-})\leq 0$, which imply $c_{m+1}(p)=b_{m+1}-pW_{m+1}\geq 0$, (i.e. 
$p\leq b_{m+1}/W_{m+1}$) and 
\begin{align*}
\mathcal{R}_{q,m}(1^{-})& =\lim_{x\rightarrow 1^{-}}\frac{\sqrt{1-x}e^{%
\mathcal{K}(\sqrt{x})}-q-\sqrt{1-x}\sum\limits_{n=0}^{m}c_{n}(p)x^{n}}{\sqrt{%
1-x}} \\
& =\lim_{x\rightarrow 1^{-}}\frac{4-q}{\sqrt{1-x}}\leq 0,
\end{align*}%
namely, $q\geq 4$.
\end{proof}

\begin{remark}
Taking $m=0$ in the double inequality (\ref{K<>I1}) leads to 
\begin{equation}
\ln \left( e^{\pi /2}-p+\frac{p}{r^{\prime }}\right) <\mathcal{K}(r)<\ln
\left( e^{\pi /2}-q+\frac{q}{r^{\prime }}\right)  \label{K<>I1-m=0}
\end{equation}%
for all $r\in (0,1)$ with the best constants $p=b_{1}/W_{1}=\pi e^{\pi /2}/4$
and $q=4$. This result was proven in \cite[Corollary 3.7]{Yang-MIA-21-2018}.
Clearly, Proposition \ref{P1} is a generalization of \cite[Corollary 3.7]%
{Yang-MIA-21-2018}.
\end{remark}

\begin{remark}
Taking $m=1$ in (\ref{K<>I1}) yields 
\begin{equation}
\ln \left[ e^{\pi /2}-p-\left( \frac{p}{2}-\frac{\pi }{8}e^{\pi /2}\right)
r^{2}+\frac{p}{r^{\prime }}\right] <\mathcal{K}(r)<\ln \left[ e^{\pi
/2}-q-\left( \frac{q}{2}-\frac{\pi }{8}e^{\pi /2}\right) r^{2}+\frac{q}{%
r^{\prime }}\right]  \label{K<>I1-m=1}
\end{equation}%
for all $r\in (0,1)$ with the best constants $p=b_{2}/W_{2}=\left[ \pi (\pi
+9)e^{\pi /2}\right] /48$ and $q=4$. Since 
\begin{align*}
& \left[ e^{\pi /2}-\frac{b_{2}}{W_{2}}-\left( \frac{b_{2}}{2W_{2}}-\frac{%
\pi }{8}e^{\pi /2}\right) r^{2}+\frac{b_{2}}{W_{2}r^{\prime }}\right]
-\left( e^{\pi /2}-\frac{b_{1}}{W_{1}}+\frac{b_{1}}{W_{1}r^{\prime }}\right)
\\
& =\frac{\pi (\pi -3)e^{\pi /2}}{96}\frac{r^{4}(r^{2}+3)}{r^{\prime }\left[
2+(r^{2}+2)r^{\prime }\right] }>0
\end{align*}%
and $2-\pi e^{\pi /2}/8=0.110\cdots >0$, we conclude that the optimal
inequality of (\ref{K<>I1-m=1}) improves the one of (\ref{K<>I1-m=0}).
\end{remark}

For $n\geq 0$, Theorem \ref{T} (ii) demonstrates that the function $x\mapsto 
\mathcal{R}_{4,n}(x)/x^{n+1}$ is strictly decreasing on $(0,1)$ together
with 
\begin{equation*}
\lim_{x\rightarrow 0^{+}}\frac{\mathcal{R}_{4,n}(x)}{x^{n+1}}%
=c_{n+1}(4)\quad \text{and}\quad \lim_{x\rightarrow 1^{-}}\frac{\mathcal{R}%
_{4,n}(x)}{x^{n+1}}=-\sum_{k=0}^{n}c_{k}(4).
\end{equation*}%
This yields 
\begin{equation*}
-\left( \sum_{n=0}^{m}c_{n}(4)\right) x^{m+1}<e^{\mathcal{K}(\sqrt{x})}-%
\frac{4}{\sqrt{1-x}}-\sum_{n=0}^{m}c_{n}(4)x^{n}<c_{m+1}(4)x^{m+1}
\end{equation*}%
for $x\in (0,1)$. By replacing $x=r^{2}$, we obtain therefore the following
proposition.

\begin{proposition}
\label{P2} Let $b_{n}$ be defined by (\ref{ekrx,1/r1-x-ps}) and $%
c_{n}=b_{n}-4W_{n}$ for $n\geq 0$. The double inequality 
\begin{equation}
\ln \left[ \frac{4}{r^{\prime }}+\sum_{n=0}^{m}c_{n}r^{2n}-\left(
\sum_{n=0}^{m}c_{n}\right) r^{2m+2}\right] <\mathcal{K}(r)<\ln \left[ \frac{4%
}{r^{\prime }}+\sum_{n=0}^{m+1}c_{n}r^{2n}\right]  \label{K<>I2}
\end{equation}%
holds for $r\in (0,1)$, where the lower and upper bounds are sharp.
\end{proposition}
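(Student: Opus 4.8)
The plan is to read (\ref{K<>I2}) off the two-sided estimate
\[
-\Big(\sum_{n=0}^{m}c_{n}\Big)x^{m+1}<e^{\mathcal{K}(\sqrt{x})}-\frac{4}{\sqrt{1-x}}-\sum_{n=0}^{m}c_{n}x^{n}<c_{m+1}x^{m+1},\qquad x\in(0,1),
\]
established immediately before the statement (here $c_{n}=b_{n}-4W_{n}$). First I add $4/\sqrt{1-x}+\sum_{n=0}^{m}c_{n}x^{n}$ to all three members, so that the middle term becomes $e^{\mathcal{K}(\sqrt{x})}$, the right member combines to $4/\sqrt{1-x}+\sum_{n=0}^{m+1}c_{n}x^{n}$, and the left one becomes $4/\sqrt{1-x}+\sum_{n=0}^{m}c_{n}x^{n}-(\sum_{n=0}^{m}c_{n})x^{m+1}$. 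All three members are positive on $(0,1)$: the outer two sandwich $e^{\mathcal{K}(\sqrt{x})}>0$, and the left member in fact exceeds $4/\sqrt{1-x}$ because $\sum_{n=0}^{m}c_{n}(x^{n}-x^{m+1})>0$ (using $c_{0}=e^{\pi/2}-4>0$, $c_{n}<0$ for $n\ge1$, and $\sum_{n=0}^{m}c_{n}=-\sum_{n=m+1}^{\infty}c_{n}>0$, the last equality being established below). Applying the increasing map $\ln$, writing $\ln e^{\mathcal{K}(\sqrt{x})}=\mathcal{K}(\sqrt{x})$, and substituting $x=r^{2}$ (so $\sqrt{1-x}=r'$ and $x^{n}=r^{2n}$) yields precisely (\ref{K<>I2}).

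For completeness I would recall how the displayed estimate arises, which is the core of the argument. By (\ref{Rp,m}), $\mathcal{R}_{4,m}(x)=\sum_{n\ge m+1}c_{n}x^{n}$; and by Lemma \ref{L-bn/Wn-i} the sequence $\{b_{n}/W_{n}\}_{n\ge1}$ increases to $4$, so $c_{n}=W_{n}(b_{n}/W_{n}-4)<0$ for every $n\ge1$. Hence $h_{m}(x):=\mathcal{R}_{4,m}(x)/x^{m+1}=\sum_{j\ge0}c_{m+1+j}x^{j}$ is a power series with nonpositive coefficients, so it is strictly decreasing on $(0,1)$, with $h_{m}(0^{+})=c_{m+1}$ and $h_{m}(1^{-})=\sum_{n=m+1}^{\infty}c_{n}=-\sum_{n=0}^{m}c_{n}$. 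Multiplying the strict bounds $h_{m}(1^{-})<h_{m}(x)<h_{m}(0^{+})$ by $x^{m+1}>0$ and substituting $\mathcal{R}_{4,m}(x)=e^{\mathcal{K}(\sqrt{x})}-4/\sqrt{1-x}-\sum_{n=0}^{m}c_{n}x^{n}$ gives the displayed estimate.

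The one genuinely analytic point — and the main obstacle — is the identity $\sum_{n=0}^{\infty}c_{n}=0$, which fixes the value of $h_{m}(1^{-})$. I would obtain it as $\mathcal{G}_{4}(1^{-})=0$: by (\ref{K-hs}) and the expansion (\ref{F-near1}) with $a=b=1/2$ one has $\mathcal{K}(\sqrt{x})=\ln(4/\sqrt{1-x})+O((1-x)\ln(1-x))$, hence $e^{\mathcal{K}(\sqrt{x})}=(4/\sqrt{1-x})\big(1+O((1-x)\ln(1-x))\big)$ and $\mathcal{G}_{4}(x)\to0$ as $x\to1^{-}$. To pass from this Abel-type limit to the numerical series, I would apply monotone convergence to the nonnegative series $\sum_{n\ge1}(-c_{n})x^{n}$: its limit as $x\to1^{-}$ equals $\sum_{n\ge1}(-c_{n})\in[0,\infty]$, and the finiteness of $\mathcal{G}_{4}(1^{-})$ forces this to converge, whence $\sum_{n\ge0}c_{n}=\mathcal{G}_{4}(1^{-})=0$. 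Everything else — the signs of the $c_{n}$, the monotonicity of $h_{m}$, and the logarithm step — is routine.

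Finally, sharpness follows from the two endpoint values of $h_{m}$. The coefficient $c_{m+1}$ of $r^{2m+2}$ in the upper bound cannot be lowered, since $e^{\mathcal{K}(\sqrt{x})}-[4/\sqrt{1-x}+\sum_{n=0}^{m}c_{n}x^{n}]=\mathcal{R}_{4,m}(x)\sim c_{m+1}x^{m+1}$ as $x\to0^{+}$; symmetrically, the coefficient $-\sum_{n=0}^{m}c_{n}$ of $r^{2m+2}$ in the lower bound is optimal because $h_{m}(x)\to-\sum_{n=0}^{m}c_{n}$ as $x\to1^{-}$. Thus equality in (\ref{K<>I2}) is approached as $r\to0^{+}$ for the upper bound and as $r\to1^{-}$ for the lower bound, confirming that both are sharp.
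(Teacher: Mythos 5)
Your proof is correct and follows essentially the same route as the paper: the paper likewise derives (\ref{K<>I2}) from the strict decrease of $x\mapsto\mathcal{R}_{4,m}(x)/x^{m+1}$ (all coefficients $c_n<0$ for $n\geq 1$, via Theorem \ref{T}(ii)/Lemma \ref{L-bn/Wn-i}) together with its endpoint limits $c_{m+1}$ at $0^{+}$ and $-\sum_{n=0}^{m}c_{n}$ at $1^{-}$, then substitutes $x=r^{2}$ and takes logarithms. Your write-up additionally supplies details the paper leaves implicit --- the identity $\sum_{n=0}^{\infty}c_{n}=\mathcal{G}_{4}(1^{-})=0$ via (\ref{F-near1}) and monotone convergence, and the positivity of the lower bound needed to apply $\ln$ --- which is a welcome tightening rather than a different method.
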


\begin{remark}
\label{rmk4} Taking $m=0$ in Proposition \ref{P2} yields 
\begin{equation}
\ln \left[ \frac{4}{r^{\prime }}+\left( e^{\pi /2}-4\right) r^{\prime 2}%
\right] <\mathcal{K}(r)<\ln \left[ \frac{4}{r^{\prime }}+\left( e^{\pi
/2}-4\right) -\left( 2-\frac{\pi }{8}e^{\pi /2}\right) r^{2}\right]
\label{K<>I2-m=0}
\end{equation}%
for $r\in (0,1)$. It was proved in \cite[Corollary 3.5]{Qiu-SIAM-JMA-29-1998}%
\ that%
\begin{equation}
\ln \left[ \frac{4}{r^{\prime }}+\left( e^{\pi /2}-4\right) r^{\prime }%
\right] <\mathcal{K}(r)<\ln \left( \frac{4}{r^{\prime }}+e^{\pi /2}-4\right)
\label{K-QI}
\end{equation}%
for $r\in \left( 0,1\right) $. In \cite[Eq. (3.5)]{Yang-MIA-21-2018}, it was
found that 
\begin{equation}
\ln \left[ \frac{4}{r^{\prime }}+\left( e^{\pi /2}-4\right) r^{\prime }%
\right] <\mathcal{K}(r)<\ln \left[ \frac{(\pi +4)e^{\pi /2}}{8r^{\prime }}%
+\left( \frac{1}{2}-\frac{\pi }{8}\right) e^{\pi /2}r^{\prime }\right]
\label{K-YI}
\end{equation}%
for $r\in (0,1)$. The left side inequality of (\ref{K<>I2-m=0}) looks like
the one of (\ref{K-QI}) but a bit worse than the left side of (\ref%
{K<>I2-m=0}), while the right side of (\ref{K<>I2-m=0}) is better than the
ones of (\ref{K-QI}) and of (\ref{K-YI}). This can be confirmed by%
\begin{equation*}
\left[ \frac{4}{r^{\prime }}+\left( e^{\pi /2}-4\right) -\left( 2-\frac{\pi 
}{8}e^{\pi /2}\right) r^{2}\right] -\left[ \frac{4}{r^{\prime }}+\left(
e^{\pi /2}-4\right) \right] <0
\end{equation*}%
and%
\begin{align*}
& \left[ \frac{4}{r^{\prime }}+e^{\pi /2}-4-\left( 2-\frac{\pi }{8}e^{\pi
/2}\right) r^{2}\right] -\left[ \frac{(\pi +4)e^{\pi /2}}{8r^{\prime }}%
+\left( \frac{1}{2}-\frac{\pi }{8}\right) e^{\pi /2}r^{\prime }\right] \\
& =-\frac{\left( 1-r^{\prime }\right) ^{2}}{8r^{\prime }}\left[ \left(
\left( \pi +4\right) e^{\pi /2}-32\right) -\left( 16-\pi e^{\pi /2}\right)
r^{\prime }\right] \\
& <-\frac{\left( \pi +2\right) e^{\pi /2}-24}{4}\frac{\left( 1-r^{\prime
}\right) ^{2}}{r^{\prime }}<0
\end{align*}%
for $r\in (0,1)$.
\end{remark}

It was shown in \cite{Petrovic-PMUB-1-1932} that every convex function $%
f:[0,\infty )\mapsto \mathbb{R}$ satisfies a functional inequality 
\begin{equation}
f(x)+f(y)\leq f(0)+f(x+y)  \label{convex}
\end{equation}%
for $x,y\in \lbrack 0,\infty )$. In particular, $f(x)$ is superadditive if $%
f(0)\leq 0$. As a matter to remind, a function $f(x)$ defined on $\mathbb{R}%
_{+}\backslash (0,a]$ is said to be superadditive if 
\begin{equation}
f(x)+f(y)\leq f(x+y)  \label{super}
\end{equation}%
for $x,y\in (a,\infty )$, while it is called subadditive if the inequality (%
\ref{super}) reverses.

Let $\mathcal{R}_{p,m}$ be defined as in (\ref{Rp,m}). Then it is clear that 
$\mathcal{R}_{p,m}(0^{+})=0$ for $m\geq 0$. Since 
\begin{equation*}
\mathcal{R}_{p,m}^{\prime \prime }(x)=\left[ \sum_{n=m+1}^{\infty
}c_{n}(p)x^{n}\right] ^{\prime \prime }=\left\{ 
\begin{array}{cc}
\sum\limits_{n=2}^{\infty }n\left( n-1\right) c_{n}\left( p\right) x^{n-2} & 
\text{if }m=0, \\ 
\sum\limits_{n=m+1}^{\infty }n\left( n-1\right) c_{n}\left( p\right) x^{n-2}
& \text{if }m\geq 1,%
\end{array}%
\right.
\end{equation*}%
it follows from Theorem \ref{T} (ii) that $c_{n}(p)\geq 0$ for $n\geq 2$ if $%
p\leq b_{2}/W_{2}=[\pi (\pi +9)e^{\pi /2}]/48$, while $c_{n}(p)\leq 0$ for $%
n\geq 2$ if $p\geq 4$. This together with (\ref{convex}) deduces that the
double inequality 
\begin{equation*}
2\mathcal{R}_{p,m}\left( \frac{x+y}{2}\right) <\mathcal{R}_{p,m}(x)+\mathcal{%
R}_{p,m}(y)<\mathcal{R}_{p,m}(x+y)
\end{equation*}%
holds for $x,y,x+y\in (0,1)$ if $p\leq \lbrack \pi (\pi +9)e^{\pi /2}]/48$
and it reverses if $p\geq 4$.

\begin{proposition}
\label{P3} Let $b_{n}$ be defined in (\ref{ekrx,1/r1-x-ps}) and $%
c_{n}(p)=b_{n}-pW_{n}$. If $p\leq \lbrack \pi (\pi +9)e^{\pi /2}]/48$, then
the double inequality 
\begin{align*}
& 2e^{\mathcal{K}\left( \sqrt{\frac{x+y}{2}}\right) }+p\left( \frac{1}{\sqrt{%
1-x}}+\frac{1}{\sqrt{1-y}}-\frac{2}{\sqrt{1-\frac{x+y}{2}}}\right)
+\sum_{n=0}^{m}c_{n}(p)\left[ x^{n}+y^{n}-2\left( \frac{x+y}{2}\right) ^{n}%
\right] \\
& <e^{\mathcal{K}\left( \sqrt{x}\right) }+e^{\mathcal{K}\left( \sqrt{y}%
\right) } \\
& <e^{\mathcal{K}\left( \sqrt{x+y}\right) }+p\left( \frac{1}{\sqrt{1-x}}+%
\frac{1}{\sqrt{1-y}}-\frac{1}{\sqrt{1-x-y}}\right) +\sum_{n=0}^{m}c_{n}(p)%
\left[ x^{n}+y^{n}-\left( x+y\right) ^{n}\right]
\end{align*}%
holds for $x,y,x+y\in (0,1)$, and it reverses if $p\geq 4$.
\end{proposition}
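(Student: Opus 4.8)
The plan is to obtain Proposition \ref{P3} as a direct algebraic consequence of the double inequality for $\mathcal{R}_{p,m}$ that was established immediately before its statement, namely
\begin{equation*}
2\mathcal{R}_{p,m}\left(\frac{x+y}{2}\right)<\mathcal{R}_{p,m}(x)+\mathcal{R}_{p,m}(y)<\mathcal{R}_{p,m}(x+y)
\end{equation*}
for $x,y,x+y\in(0,1)$ when $p\leq[\pi(\pi+9)e^{\pi/2}]/48$, with all inequalities reversed when $p\geq4$. Thus the analytic heart of the matter---the convexity (respectively concavity) of $\mathcal{R}_{p,m}$ together with $\mathcal{R}_{p,m}(0^+)=0$---is already in hand: the left inequality is midpoint (Jensen) convexity, and the right inequality is the Petrovi\'{c} superadditivity (\ref{convex}) specialized to a function vanishing at the origin. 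What remains is only to translate this chain back into the elliptic-integral variables.

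First I would recall the defining identity (\ref{Rp,m}),
\begin{equation*}
\mathcal{R}_{p,m}(t)=e^{\mathcal{K}(\sqrt{t})}-\frac{p}{\sqrt{1-t}}-\sum_{n=0}^{m}c_{n}(p)\,t^{n},
\end{equation*}
and substitute it at each of the four evaluation points $t\in\{x,\,y,\,(x+y)/2,\,x+y\}$ occurring in the double inequality. Then I would rearrange each of the two resulting inequalities so that the two pure exponential terms $e^{\mathcal{K}(\sqrt{x})}+e^{\mathcal{K}(\sqrt{y})}$ stand alone in the middle, transposing all the $p$-weighted reciprocal-square-root terms and all the polynomial sum-terms to the opposite side.

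For the left-hand inequality the midpoint evaluation contributes the combination $x^{n}+y^{n}-2((x+y)/2)^{n}$ inside the finite sum and $1/\sqrt{1-x}+1/\sqrt{1-y}-2/\sqrt{1-(x+y)/2}$ in the $p$-term, which reproduces exactly the lower bound in the statement; for the right-hand inequality the evaluation at $x+y$ contributes $x^{n}+y^{n}-(x+y)^{n}$ and $1/\sqrt{1-x}+1/\sqrt{1-y}-1/\sqrt{1-(x+y)}$, yielding the stated upper bound. The reversal for $p\geq4$ follows verbatim by reversing every inequality, since there $\mathcal{R}_{p,m}$ is concave.

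I do not anticipate a genuine obstacle, because Theorem \ref{T}(ii) has already supplied the sign information $c_{n}(p)\geq0$ (for $n\geq2$ when $p\leq[\pi(\pi+9)e^{\pi/2}]/48$) and $c_{n}(p)\leq0$ (for $n\geq2$ when $p\geq4$) that governs the second derivative $\mathcal{R}_{p,m}''$ and hence the convexity. The only point demanding care is the sign bookkeeping when transposing the terms $-p/\sqrt{1-t}$ and $-\sum_{n=0}^{m}c_{n}(p)t^{n}$ across the inequalities, so that the $p$-weighted reciprocal differences and the bracketed polynomial differences emerge precisely as displayed; one also checks that the $n=1$ (affine) contribution drops out of both combinations, as it must.
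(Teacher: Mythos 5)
Your proposal is correct and takes essentially the same route as the paper: the paper's entire proof of Proposition \ref{P3} consists of the paragraph preceding it, where the convexity (resp.\ concavity) of $\mathcal{R}_{p,m}$ is obtained from Theorem \ref{T} (ii), combined with $\mathcal{R}_{p,m}(0^{+})=0$, midpoint convexity and Petrovi\'{c}'s inequality (\ref{convex}) to give $2\mathcal{R}_{p,m}\left( \tfrac{x+y}{2}\right) <\mathcal{R}_{p,m}(x)+\mathcal{R}_{p,m}(y)<\mathcal{R}_{p,m}(x+y)$, after which the proposition is exactly the term-by-term rearrangement you carry out. Your sign bookkeeping reproduces the stated bounds precisely, so there is no gap.
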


Taking $m=0$ into Proposition \ref{P3}, we obtain

\begin{corollary}
\label{C-P3-m=0} If $p\leq \lbrack \pi (\pi +9)e^{\pi /2}]/48$, then the
double inequality 
\begin{align*}
& 2e^{\mathcal{K}\left( \sqrt{\frac{x+y}{2}}\right) }+p\left( \frac{1}{\sqrt{%
1-x}}+\frac{1}{\sqrt{1-y}}-\frac{2}{\sqrt{1-\frac{x+y}{2}}}\right) <e^{%
\mathcal{K}\left( \sqrt{x}\right) }+e^{\mathcal{K}\left( \sqrt{y}\right) } \\
& \hspace{3cm}<e^{\mathcal{K}\left( \sqrt{x+y}\right) }+p\left( \frac{1}{%
\sqrt{1-x}}+\frac{1}{\sqrt{1-y}}-\frac{1}{\sqrt{1-x-y}}-1\right) +e^{\pi /2}
\end{align*}%
holds for $x,y,x+y\in (0,1)$, and it is reversed if $p\geq 4$.
\end{corollary}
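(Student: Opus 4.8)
The plan is to read Corollary~\ref{C-P3-m=0} as the special case $m=0$ of Proposition~\ref{P3}, so I would first establish Proposition~\ref{P3} and then specialize. The driving object is the remainder function $\mathcal{R}_{p,m}$ of (\ref{Rp,m}), for which the paragraph preceding Proposition~\ref{P3} already records the two facts I need: $\mathcal{R}_{p,m}(0^{+})=0$, and $\mathcal{R}_{p,m}''$ is a power series whose coefficients are the multiples $n(n-1)c_n(p)$ of the $c_n(p)$ with $n\geq 2$. By Theorem~\ref{T}(ii), these coefficients are all nonnegative when $p\leq b_2/W_2=[\pi(\pi+9)e^{\pi/2}]/48$ and all nonpositive when $p\geq 4$; since $b_n/W_n$ is strictly increasing with supremum $4$, in fact some coefficient is strictly positive (resp. negative) in each regime, so $\mathcal{R}_{p,m}$ is strictly convex on $(0,1)$ in the first case and strictly concave in the second.

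Next I would convert this convexity into the required two-sided bound on $\mathcal{R}_{p,m}$ itself. Midpoint (Jensen) convexity gives $2\mathcal{R}_{p,m}((x+y)/2)<\mathcal{R}_{p,m}(x)+\mathcal{R}_{p,m}(y)$, while Petrovi\'{c}'s inequality (\ref{convex}), applied with $\mathcal{R}_{p,m}(0^{+})=0$, gives $\mathcal{R}_{p,m}(x)+\mathcal{R}_{p,m}(y)<\mathcal{R}_{p,m}(x+y)$; both hold for $x,y,x+y\in(0,1)$ and both reverse under concavity, i.e. for $p\geq 4$. This is exactly the chain $2\mathcal{R}_{p,m}((x+y)/2)<\mathcal{R}_{p,m}(x)+\mathcal{R}_{p,m}(y)<\mathcal{R}_{p,m}(x+y)$ displayed just above Proposition~\ref{P3}.

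It remains to unfold the definition $\mathcal{R}_{p,m}(t)=e^{\mathcal{K}(\sqrt{t})}-p/\sqrt{1-t}-\sum_{n=0}^{m}c_n(p)t^{n}$ in this chain and collect terms: the exponential terms are left untouched, the terms $p/\sqrt{1-t}$ gather into the stated parenthetical differences, and the finite sums gather into the bracketed combinations $x^n+y^n-2((x+y)/2)^n$ and $x^n+y^n-(x+y)^n$, producing Proposition~\ref{P3}. Finally I set $m=0$ for the Corollary, where the only surviving sum term is $c_0(p)=b_0-pW_0=e^{\pi/2}-p$. On the left this appears as $-2c_0(p)$ on both sides and cancels, leaving the stated left-hand inequality; on the right the net contribution is $+c_0(p)=e^{\pi/2}-p$, and absorbing the $-p$ into the factor $p$ turns the parenthesis into $(\cdots-1)$ and leaves the free constant $e^{\pi/2}$, which is precisely the right-hand side of Corollary~\ref{C-P3-m=0}.

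I do not anticipate a serious obstacle: all the analytic weight sits in the sign pattern of the $c_n(p)$, which Theorem~\ref{T}(ii) supplies, and the rest is the two convexity lemmas plus bookkeeping. The two points I would watch are the strictness of the inequalities — legitimate because $\mathcal{R}_{p,m}$ is not affine, its second derivative having a genuinely positive (or negative) coefficient in each regime — and the sign arithmetic when absorbing $-p$ into the $p$-bracket on the right, which is the place where a slip is most likely.
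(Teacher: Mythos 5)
Your proposal is correct and follows essentially the same route as the paper: the paper likewise obtains the corollary by setting $m=0$ in Proposition \ref{P3}, which it derives from the convexity/concavity of $\mathcal{R}_{p,m}$ (via the signs of the coefficients $n(n-1)c_{n}(p)$ supplied by Theorem \ref{T}(ii)), midpoint convexity, and Petrovi\'{c}'s inequality (\ref{convex}), with exactly the $m=0$ bookkeeping you describe. The only caveat — shared with the paper itself, so not a deviation on your part — is that the left inequality degenerates to an equality when $x=y$, since strict midpoint convexity requires $x\neq y$.
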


\begin{remark}
Let $p=4$, $y=1-r^{2}$ and $x\rightarrow r^{2}$ in Corollary \ref{C-P3-m=0}.
Then we obtain 
\begin{equation*}
e^{\pi /2}-4<e^{\mathcal{K}(r)}+e^{\mathcal{K}(r^{\prime })}-4\left( \frac{1%
}{r}+\frac{1}{r^{\prime }}\right) \leq 2e^{\mathcal{K}(1/\sqrt{2})}-8\sqrt{2}
\end{equation*}%
for $r\in (0,1)$, where 
\begin{equation*}
\mathcal{K}\left( \frac{1}{\sqrt{2}}\right) =\frac{\pi }{2}F\left( \frac{1}{2%
},\frac{1}{2};1;\frac{1}{2}\right) =\frac{\Gamma (1/4)^{2}}{4\sqrt{\pi }},
\end{equation*}%
which can be computed by the formula (see \cite[Eq. (15.1.26)]%
{Abramowitz-HMFFGMT-1972})%
\begin{equation}
F\left( a,1-a;b;\frac{1}{2}\right) =\frac{2^{1-b}\sqrt{\pi }\Gamma \left(
b\right) }{\Gamma \left( a/2+b/2\right) \Gamma \left( 1/2+b/2-a/2\right) }.
\label{F-x=1/2}
\end{equation}%
Clearly, the lower and upper bounds are sharp by (\ref{Kaf}).
\end{remark}

The following lemma was proved in \cite[Lemma 4]{Tian-RM-77-2022}, where $h$
should be infinitely differentiable instead of \textquotedblleft thrice
differentiable\textquotedblright .

\begin{lemma}
\label{L-p-cm}Let the function $h$ be defined on $\mathbb{R}$ which is
infinitely differentiable. If $h^{\left( 2k+1\right) }\left( x\right)
>\left( <\right) 0$ for $x\in \mathbb{R}$ and $k\in \mathbb{N}$ then the
function%
\begin{equation*}
\phi \left( x\right) =\frac{h\left( 2c-x\right) -h\left( x\right) }{2c-2x}%
\text{ if }x\neq c\text{ \ and \ }\phi \left( c\right) =h^{\prime }\left(
c\right)
\end{equation*}%
satisfies that $\phi ^{\left( 2k-2\right) }\left( x\right) $ is strictly
convex (concave) on $\mathbb{R}$, and is decreasing (increasing) on $\left(
-\infty ,c\right) $ and increasing (decreasing) on $\left( c,\infty \right) $%
. Furthermore, if $h^{\prime }\left( x\right) $ is absolutely monotonic on $%
\mathbb{R}$ then $\phi \left( x\right) $ is completely monotonic on $\left(
-\infty ,c\right) $ and absolutely monotonic on $\left( c,\infty \right) $.
\end{lemma}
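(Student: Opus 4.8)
The plan is to shift the center to the origin and pass to an integral representation. Setting $\psi(t)=\phi(c+t)$, the substitution $x=c+t$ (so $2c-x=c-t$ and $2c-2x=-2t$) gives $\psi(t)=\left[h(c+t)-h(c-t)\right]/(2t)$ for $t\neq 0$ and $\psi(0)=h'(c)$, and this $\psi$ is manifestly \emph{even}. The engine of the whole argument is the representation obtained by writing the numerator as $\int_{-t}^{t}h'(c+s)\,ds$ and substituting $s=tv$:
\[
\psi(t)=\frac{1}{2}\int_{-1}^{1}h'(c+tv)\,dv,
\]
which is valid for every $t\in\mathbb{R}$ (including $t=0$, where it returns $h'(c)$). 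Since $h\in C^{\infty}$ and the integration runs over the compact interval $[-1,1]$, differentiation under the integral sign is legitimate and yields
\[
\psi^{(n)}(t)=\frac{1}{2}\int_{-1}^{1}v^{n}\,h^{(n+1)}(c+tv)\,dv\qquad(n\geq 0).
\]
In particular $\phi(x)=\psi(x-c)$ is $C^{\infty}$ on all of $\mathbb{R}$, which already justifies the differentiability implicit in the statement.

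For the convexity and monotonicity claims, consider the case $h^{(2k+1)}>0$ (the concave case being identical with reversed signs). Taking $n=2k$ above,
\[
\psi^{(2k)}(t)=\frac{1}{2}\int_{-1}^{1}v^{2k}\,h^{(2k+1)}(c+tv)\,dv>0
\]
for all $t$, because $v^{2k}\geq 0$ (vanishing only at $v=0$) and $h^{(2k+1)}>0$; hence $\psi^{(2k-2)}$, equivalently $\phi^{(2k-2)}$, is strictly convex. For monotonicity I would exploit parity: since $\psi$ is even, $\psi^{(2k-1)}$ is odd, so $\psi^{(2k-1)}(0)=0$, while strict convexity of $\psi^{(2k-2)}$ forces $\psi^{(2k-1)}$ to be strictly increasing. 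Therefore $\psi^{(2k-1)}<0$ on $(-\infty,0)$ and $>0$ on $(0,\infty)$, which under $x=c+t$ translates precisely to $\phi^{(2k-2)}$ decreasing on $(-\infty,c)$ and increasing on $(c,\infty)$.

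For the \textbf{furthermore} part, suppose $h'$ is absolutely monotonic, i.e. $h^{(m)}\geq 0$ for every $m\geq 1$. Folding the integral about $v=0$ (split $\int_{-1}^{1}=\int_{0}^{1}+\int_{-1}^{0}$ and send $v\mapsto -v$ in the second piece) gives
\[
\psi^{(n)}(t)=\frac{1}{2}\int_{0}^{1}v^{n}\left[h^{(n+1)}(c+tv)+(-1)^{n}h^{(n+1)}(c-tv)\right]dv.
\]
Fix $t>0$. When $n$ is even the bracket is a sum of two nonnegative terms; when $n$ is odd it equals $h^{(n+1)}(c+tv)-h^{(n+1)}(c-tv)\geq 0$, since $h^{(n+2)}\geq 0$ makes $h^{(n+1)}$ nondecreasing and $c+tv\geq c-tv$. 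In either case $\psi^{(n)}(t)\geq 0$ for all $n\geq 0$ and $t>0$, so $\phi$ is absolutely monotonic on $(c,\infty)$. Finally, evenness of $\psi$ gives $\psi^{(n)}(-t)=(-1)^{n}\psi^{(n)}(t)$, whence for $t<0$ one has $(-1)^{n}\psi^{(n)}(t)=\psi^{(n)}(|t|)\geq 0$, i.e. $\phi$ is completely monotonic on $(-\infty,c)$.

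I expect the only genuine obstacle to be the initial reframing: once $\psi(t)=\tfrac12\int_{-1}^{1}h'(c+tv)\,dv$ is in hand and its parity is exploited, every conclusion reduces to an elementary sign count inside the integral. The one place demanding care is the odd-$n$ case of the last step, where positivity comes not from the integrand's sign but from monotonicity of $h^{(n+1)}$ together with $c+tv\geq c-tv$; I would state this folding identity explicitly to avoid sign errors.
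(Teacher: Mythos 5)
Your proof is correct, but there is nothing in the paper to compare it against: the authors do not prove Lemma \ref{L-p-cm} at all, they quote it from \cite[Lemma 4]{Tian-RM-77-2022}, adding only the remark that $h$ should be assumed infinitely differentiable rather than thrice differentiable. Your argument is therefore a self-contained replacement for that citation, and a clean one. Its engine, the representation $\psi(t)=\phi(c+t)=\frac{1}{2}\int_{-1}^{1}h'(c+tv)\,dv$, does three jobs at once: it exhibits $\phi\in C^{\infty}(\mathbb{R})$ across the removable singularity at $x=c$, which is the one point where the statement hides a genuine issue and exactly where the corrected smoothness hypothesis matters; it reduces the convexity claim to the pointwise sign of $\psi^{(2k)}(t)=\frac{1}{2}\int_{-1}^{1}v^{2k}h^{(2k+1)}(c+tv)\,dv$; and, after folding about $v=0$, it reduces both the absolute and the complete monotonicity claims to the parity identity $\psi^{(n)}(-t)=(-1)^{n}\psi^{(n)}(t)$ together with monotonicity of $h^{(n+1)}$ in the odd-$n$ case, which you rightly single out as the step needing care. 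Two minor points: strict positivity of $\psi^{(2k)}$ deserves the one-line justification that the integrand is continuous, nonnegative, and positive off $v=0$ (you gesture at this, and it is easily supplied); and what you actually prove is strict monotonicity of $\phi^{(2k-2)}$ on each side of $c$, slightly stronger than the lemma asserts. Neither affects correctness.
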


Applying Lemma \ref{L-p-cm} and the absolute monotonicity of $-\mathcal{G}%
_{4}^{\prime }\left( x\right) $, we have the following statement.

\begin{proposition}
The function%
\begin{equation*}
\mathcal{H}\left( x\right) =\frac{\mathcal{G}_{4}\left( x\right) -\mathcal{G}%
_{4}\left( 1-x\right) }{1-2x}=\frac{1}{1-2x}\left[ e^{\mathcal{K}\left( 
\sqrt{x}\right) }-e^{\mathcal{K}\left( \sqrt{1-x}\right) }+\frac{4}{\sqrt{x}}%
-\frac{4}{\sqrt{1-x}}\right]
\end{equation*}%
is completely monotonic on $\left( 0,1/2\right) $ and absolutely monotonic
on $\left( 1/2,1\right) $. Consequently, the double inequality%
\begin{equation}
\beta \left( 1-2r^{2}\right) <e^{\mathcal{K}\left( r\right) }-e^{\mathcal{K}%
\left( r^{\prime }\right) }+\frac{4}{r}-\frac{4}{r^{\prime }}<\alpha \left(
1-2r^{2}\right)  \label{eK-eK'<>}
\end{equation}%
holds for $r\in \left( 0,1/\sqrt{2}\right) $ with the best constants $\alpha
=e^{\pi /2}-4\approx 0.810$ and%
\begin{equation*}
\beta =4\sqrt{2}-\frac{1}{\sqrt{\pi }}\Gamma ^{2}\left( \frac{3}{4}\right)
\exp \left( \frac{\Gamma \left( 1/4\right) ^{2}}{4\sqrt{\pi }}\right)
\approx 0.246.
\end{equation*}%
The double inequality (\ref{eK-eK'<>}) reverses for $r\in \left( 1/\sqrt{2}%
,1\right) $.
\end{proposition}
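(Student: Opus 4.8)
The plan is to apply Lemma \ref{L-p-cm} with $h = -\mathcal{G}_4$ and $c = 1/2$. With these choices $2c - x = 1-x$ and $2c - 2x = 1-2x$, so the auxiliary function of the lemma is
\[
\phi(x) = \frac{h(1-x) - h(x)}{1-2x} = \frac{\mathcal{G}_4(x) - \mathcal{G}_4(1-x)}{1-2x} = \mathcal{H}(x), \qquad \phi(1/2) = h'(1/2) = -\mathcal{G}_4'(1/2).
\]
The hypothesis of the ``Furthermore'' clause is that $h' = -\mathcal{G}_4'$ be absolutely monotonic, which is precisely Theorem \ref{T}(ii) with $k=1$ and $p=4$. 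Although Lemma \ref{L-p-cm} is stated for functions on $\mathbb{R}$, its hypotheses and conclusions are local and only invoke the reflection $x \mapsto 1-x$ about $c = 1/2$; since $(0,1)$ is symmetric about $1/2$ and $-\mathcal{G}_4'$ is absolutely monotonic there, the lemma applies verbatim on $(0,1)$. This at once gives that $\mathcal{H} = \phi$ is completely monotonic on $(0,1/2)$ and absolutely monotonic on $(1/2,1)$, proving the first assertion.

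For the double inequality, the lemma also yields that $\mathcal{H}$ is strictly decreasing on $(0,1/2)$ and increasing on $(1/2,1)$; moreover the definition shows $\mathcal{H}(1-x) = \mathcal{H}(x)$, so $\mathcal{H}$ is symmetric about $1/2$. Consequently $\beta := \mathcal{H}(1/2) < \mathcal{H}(x) < \mathcal{H}(0^+) =: \alpha$ on $(0,1) \setminus \{1/2\}$, with the two limits not attained. Substituting $x = r^2$ and using $\mathcal{G}_4(x) - \mathcal{G}_4(1-x) = e^{\mathcal{K}(r)} - e^{\mathcal{K}(r')} + 4/r - 4/r'$ together with $1-2x = 1-2r^2$, I multiply $\beta < \mathcal{H}(x) < \alpha$ by $1-2r^2$: for $r \in (0,1/\sqrt{2})$ this factor is positive and yields (\ref{eK-eK'<>}), while for $r \in (1/\sqrt{2},1)$ it is negative and reverses the inequality. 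The constants are sharp because they are the unattained endpoint limits of the strictly monotone $\mathcal{H}$.

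It remains to evaluate $\alpha$ and $\beta$. For $\alpha = \mathcal{H}(0^+) = \mathcal{G}_4(0) - \mathcal{G}_4(1^-)$ I use $\mathcal{G}_4(0) = e^{\mathcal{K}(0)} - 4 = e^{\pi/2} - 4$, and then $\mathcal{G}_4(1^-) = 0$, which follows from the asymptotic formula (\ref{Kaf}) sharpened by (\ref{F-near1}): since $\mathcal{K}(r) - \ln(4/r') = O(r'^2 \ln r')$, one has $e^{\mathcal{K}(\sqrt{y})} - 4/\sqrt{1-y} = (4/\sqrt{1-y})\,O(r'^2\ln r') \to 0$ as $y \to 1^-$. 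Hence $\alpha = e^{\pi/2} - 4$. For $\beta = -\mathcal{G}_4'(1/2)$ I differentiate $\mathcal{G}_4(x) = e^{\mathcal{K}(\sqrt{x})} - 4(1-x)^{-1/2}$ using $[e^{\mathcal{K}(\sqrt{x})}]' = (\pi/8)F(3/2,3/2;2;x)e^{\mathcal{K}(\sqrt{x})}$ from Lemma \ref{L-eK-ps}, giving
\[
-\mathcal{G}_4'(1/2) = \frac{2}{(1/2)^{3/2}} - \frac{\pi}{8}\,F(3/2,3/2;2;1/2)\,e^{\mathcal{K}(1/\sqrt{2})} = 4\sqrt{2} - \frac{\pi}{8}\,F(3/2,3/2;2;1/2)\,e^{\mathcal{K}(1/\sqrt{2})}.
\]
The linear transformation (\ref{Ltf}) gives $F(3/2,3/2;2;x) = (1-x)^{-1}F(1/2,1/2;2;x)$, and the special value (\ref{F-x=1/2}) with $a = 1/2$, $b = 2$ evaluates $F(1/2,1/2;2;1/2)$ in terms of $\Gamma(5/4)$; combining and simplifying via the reflection formula $\Gamma(1/4)\Gamma(3/4) = \pi\sqrt{2}$ gives $F(3/2,3/2;2;1/2) = 8\Gamma(3/4)^2/\pi^{3/2}$, so that $(\pi/8)F(3/2,3/2;2;1/2) = \Gamma(3/4)^2/\sqrt{\pi}$. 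Inserting this and $\mathcal{K}(1/\sqrt{2}) = \Gamma(1/4)^2/(4\sqrt{\pi})$ reproduces exactly the stated $\beta$.

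I expect the only real obstacle to be this closed-form evaluation of $\beta$ — reducing $F(3/2,3/2;2;1/2)$ to Gamma values via (\ref{Ltf}) and (\ref{F-x=1/2}) and matching the result to $4\sqrt{2} - \Gamma(3/4)^2\pi^{-1/2}\exp(\Gamma(1/4)^2/(4\sqrt{\pi}))$ — together with the easily overlooked justification of $\mathcal{G}_4(1^-) = 0$, where one subtracts two diverging quantities and hence needs the second-order term in (\ref{F-near1}), not merely the leading asymptotic (\ref{Kaf}). The monotonicity half of the statement is essentially a direct citation of Lemma \ref{L-p-cm} and Theorem \ref{T}(ii).
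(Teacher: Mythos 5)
Your proposal is correct and follows essentially the same route as the paper: both apply Lemma \ref{L-p-cm} with $h=-\mathcal{G}_{4}$, $c=1/2$ (the hypothesis supplied by Theorem \ref{T}(ii) with $k=1$, $p=4$), then derive the double inequality from the resulting monotonicity of $\mathcal{H}$ and evaluate the endpoint constants via (\ref{Ltf}), (\ref{F-x=1/2}) and the reflection formula. If anything, your write-up is slightly more careful on two points the paper leaves implicit, namely the justification that $\mathcal{G}_{4}(1^{-})=0$ (which requires the error term in (\ref{F-near1}), not just the leading asymptotic (\ref{Kaf})) and the transfer of Lemma \ref{L-p-cm} from $\mathbb{R}$ to the symmetric interval $(0,1)$.
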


\begin{proof}
Taking $h\left( x\right) =-\mathcal{G}_{4}\left( x\right) $ and $c=1/2$ in
Lemma \ref{L-p-cm} yields the complete and absolute monotonicity of $%
\mathcal{H}\left( x\right) $ on $\left( 0,1/2\right) $ and $\left(
1/2,1\right) $, respectively. The decreasing property of $\mathcal{H}\left(
x\right) $ on $\left( 0,1/2\right) $ implies that%
\begin{equation}
-\mathcal{G}_{4}^{\prime }\left( \frac{1}{2}\right) =\mathcal{H}\left( \frac{%
1}{2}\right) <\mathcal{H}\left( x\right) =\mathcal{H}\left( 0\right) =e^{\pi
/2}-4  \label{H<>}
\end{equation}%
for $x\in \left( 0,1/2\right) $. Using the formula (\ref{F-x=1/2}) we have%
\begin{eqnarray*}
F\left( \frac{1}{2},\frac{1}{2};1;\frac{1}{2}\right) &=&\frac{1}{2\pi ^{3/2}}%
\Gamma ^{2}\left( \frac{1}{4}\right) , \\
F\left( \frac{1}{2},\frac{1}{2};2;\frac{1}{2}\right) &=&\frac{4}{\pi ^{3/2}}%
\Gamma ^{2}\left( \frac{3}{4}\right) .
\end{eqnarray*}%
Then%
\begin{eqnarray*}
-\mathcal{G}_{4}^{\prime }\left( \frac{1}{2}\right) &=&\left[ \frac{2}{%
\left( 1-x\right) ^{3/2}}-\frac{\pi }{8\left( 1-x\right) }F\left( \frac{1}{2}%
,\frac{1}{2};2;x\right) e^{\mathcal{K}\left( \sqrt{x}\right) }\right]
_{x=1/2} \\
&=&4\sqrt{2}-\frac{1}{\sqrt{\pi }}\Gamma ^{2}\left( \frac{3}{4}\right) \exp
\left( \frac{\Gamma ^{2}\left( 1/4\right) }{4\sqrt{\pi }}\right) .
\end{eqnarray*}%
Let $x=r^{2}$. Then (\ref{H<>}) implies (\ref{eK-eK'<>}). In a similar way,
we can prove that (\ref{eK-eK'<>}) reverses for $r\in \left( 1/\sqrt{2}%
,1\right) $.
\end{proof}

\section{Concluding remarks}

In this paper, we investigated the absolute monotonicity of the functions $%
\pm \mathcal{G}_{p}\left( x\right) $ and $\pm \mathcal{G}_{p}^{\left(
k\right) }\left( x\right) $ for $k\in \mathbb{N}$ on $\left( 0,1\right) $,
where 
\begin{equation*}
\mathcal{G}_{p}\left( x\right) =e^{\mathcal{K}\left( \sqrt{x}\right) }-\frac{%
p}{\sqrt{1-x}}=\sum_{n=0}^{\infty }b_{n}x^{n}-p\sum_{n=0}^{\infty
}W_{n}x^{n}.
\end{equation*}%
These absolute monotonicity results yield several new functional inequality
related to $\mathcal{K}\left( r\right) $. It is worth empathizing our
approach used in this paper is very novel and interesing. Follow the usual
method, ones should determine the signs of those coefficients of power
series of $\pm \mathcal{G}_{p}\left( x\right) $ and $\pm \mathcal{G}%
_{p}^{\left( k\right) }\left( x\right) $ by directly using the recurrence
formula (\ref{bn+1/bn>}) and $W_{n}$ defined by (\ref{Wn}), which is very
complicated and hard. By our new idea (Steps 1 and 2), we found a surprising
property about $b_{n}$ and $W_{n}$, that is, the sequence $\left\{
b_{n}/W_{n}\right\} _{n\geq 1}$ is increasing with $\lim_{n\rightarrow
\infty }\left( b_{n}/W_{n}\right) =4$. It is with this property that we can
conveniently prove the desired absolute monotonicity results. Such new idea
may contribute to deal with certain problems on monotonicity, convexity and
absolute monotonicity occured in special functions.

Finally, we present two remarks and a problem.

\begin{remark}
It was proved in \cite{Yang-JMI-15-2021} that%
\begin{equation*}
\mathcal{F}_{1/2}\left( x\right) =\sqrt{1-x}e^{\mathcal{K}\left( \sqrt{x}%
\right) }=\sum_{n=0}^{\infty }a_{n}x^{n},
\end{equation*}%
where $a_{0}=e^{\pi /2}$ and $a_{n}<0$ for all $n\geq 1$. Differentiation
yields%
\begin{equation*}
\sqrt{1-x}\mathcal{F}_{1/2}^{\prime }\left( x\right) =\left[ \frac{\pi }{8}%
F\left( \frac{1}{2},\frac{1}{2};2;x\right) -\frac{1}{2}\right] e^{\mathcal{K}%
\left( \sqrt{x}\right) }=G\left( x\right) .
\end{equation*}%
Can prove that $G^{\prime }\left( x\right) $ is absolutely monotonic on $%
\left( 0,1\right) $ by the absolute monotonicity of $-\mathcal{F}%
_{1/2}^{\prime }\left( x\right) $? Or vice versa?
\end{remark}

\begin{remark}
Since $b_{0}/W_{0}>b_{1}/W_{1}$, we see that the sequence $\left\{
b_{n}/W_{n}\right\} _{n\geq 0}$ is decreasing for $0\leq n\leq 1$ and
increasing for $n\geq 1$. Making use of the unimodal type monotonicity rule
of the ratio of power series given in \cite[Theorem 2.1]{Yang-JMAA-428-2015}%
, we can only deduce that%
\begin{equation*}
\mathcal{F}_{1/2}\left( x\right) =\frac{e^{\mathcal{K}\left( \sqrt{x}\right)
}}{1/\sqrt{1-x}}=\frac{\sum_{n=0}^{\infty }b_{n}x^{n}}{\sum_{n=0}^{\infty
}W_{n}x^{n}}
\end{equation*}%
is decreasing on $\left( 0,1\right) $. Employing the recurrence technique,
however, Yang and Tian have shown that $-\mathcal{F}_{1/2}^{\prime }\left(
x\right) $ is absolutely monotonic on $\left( 0,1\right) $. In a similar
way, by the monotonicity rule of the ratio of power series given in \cite%
{Biernacki-AUMC-S-9-1955}, we can draw a conclusion that%
\begin{equation*}
x\mapsto \mathcal{J}\left( x\right) =\frac{e^{\mathcal{K}\left( \sqrt{x}%
\right) }-e^{\pi /2}}{1/\sqrt{1-x}-1}=\frac{\sum_{n=1}^{\infty }b_{n}x^{n}}{%
\sum_{n=1}^{\infty }W_{n}x^{n}}
\end{equation*}%
is increasing on $\left( 0,1\right) $. However, by the increasing property
of $\left\{ b_{n}/W_{n}\right\} _{n\geq 1}$ and recurrence technique, we can
prove that this function $\mathcal{J}\left( x\right) $ is absolutely
monotonic on $\left( 0,1\right) $, and the proof of which is left to the
reader.
\end{remark}

\begin{remark}
So far, several interesting questions remain unresolved involving the
absolutely monotonic functions related to the asymptotic formula (\ref{Kaf}%
), for example,

(i) In \cite[Conjecture 1]{Tian-RM-77-2022}, it was conjectured that the
function $-f_{4}^{\prime \prime \prime }\left( x\right) $ is absolutely
monotonic on $\left( 0,1\right) $, where $f_{p}\left( x\right) $ is defined
in Introduction of this paper.

(ii) In \cite[Conjecture 1]{Tian-RM-77-2022}, it was conjectured that the
function $1/f_{4}\left( x\right) $ is absolutely monotonic on $\left(
0,1\right) $.

(iii) Is the function $\left( f_{4}\left( x\right) -1\right) /\left(
1-x\right) $ absolutely monotonic on $\left( 0,1\right) $? This is a special
of the open problem proposed in \cite[Appendix G, Open Problems (2)]%
{Anderson-CIIQM-JW-1997}, which is still open.
\end{remark}

\noindent \textbf{Acknowledgements}

\noindent{We would like to thank the three referees for the careful review
on the manuscript. This research was supported by the National Natural
Science Foundation of China (11971142) and the Natural Science Foundation of
Zhejiang Province (LY24A010011).}

\noindent\textbf{Data availability}\newline
Data sharing not applicable to this article as no datasets were generated or
analysed during the current study.

\noindent\textbf{Declarations}

\noindent \textbf{Conflict of interest}\ \ The authors declare that they
have no conflict of interest.

\end{document}